\def\authorsPS{}
\newcommand\authPS[8]{\ifnum\authPSc<1\def\authPSc{1}\else, \fi{\large\sffamily #1 #2}%
\edef\authorsPS{\authorsPS \par\vskip 1mm\noindent #1 #2:\hskip 5mm #3, #4, #5, #6, #7, #8}}
\newtheorem{theorem}{Theorem}[section]
\newtheorem{lemma}[theorem]{Lemma}
\newtheorem{proposition}[theorem]{Proposition}
\theoremstyle{definition}
\newtheorem{definition}[theorem]{Definition}
\theoremstyle{remark}
\def\authPSc{0}
\newcommand{\beq}[1]{
\begin{equation}\label{#1}}
\newcommand{\eeq}{\end{equation}}
\newcommand{\req}[1]{{\rm(\ref{#1})}}
\newcommand{\hten}{{\mathfrak H}}
\newcommand{\law}{\stackrel{\cal L}{\longrightarrow}}
\begin{document}


\title{CLT for an iterated integral with respect to fBm with H $>$ 1/2}

\author{Daniel Harnett\thanks{Corresponding author}, David Nualart\thanks{
D. Nualart is supported by the NSF grant DMS1208625. \newline
  Keywords:  Malliavin calculus, fractional Brownian motion.
  }   \\
  Department of Mathematics\thinspace ,\ University of Kansas\\
1460 Jayhawk Blvd\thinspace , \ Lawrence, Kansas 66045-7594 }
\maketitle

\begin{abstract}
We construct an iterated stochastic integral with fractional Brownian motion with H $>$ 1/2.  The first integrand is a deterministic function, and each successive integral is with respect to an independent fBm.  We show that this symmetric stochastic integral is equal to the Malliavin divergence integral.  By a version of the Fourth Moment theorem of Nualart and Peccati \cite{NP05}, we show that a family of such integrals converges in distribution to a scaled Brownian motion.  An application is an approximation to the windings for a planar fBm, previously studied by Baudoin and Nualart \cite{BN06}.  
   
\end{abstract}
\section{Introduction}
Let $B = \{(B_t^1, \dots, B_t^q), t\ge 0\}$ be a multidimensional fractional Brownian motion (fBm) with Hurst parameter $H>1/2$.  We study the asymptotic behavior as $k\to\infty$ of multiple stochastic integrals of the particular form:
\[ Y_{k^t} := \int_{[0,\infty)^q} s_q^{-qH} {\mathbf 1}_{\{1 \le s_1<\dots< s_q\le k^t\}} dB = \int_1^{k^t} \int_1^{s_q}\cdots\int_1^{s_2} s_q^{-qH} dB_{s_1}^1\dots dB_{s_{q-1}}^{q-1}~dB_{s_q}^q\]
where $t >0$ and each iterated integral is a pathwise symmetric integral in the sense of Russo and Vallois \cite{RV93}, and also a divergence integral.
Our main result is a central limit theorem for the process $\{ Y_{k^t}, t\ge 0\}$, namely that $\frac{Y_{k^t}}{\sqrt{\log k}}$ converges in distribution as $k\to \infty$ to a scaled Brownian motion.  Our approach uses the techniques of Malliavin calculus, where we express $Y_{k^t}$ in terms of the divergence integral $\delta$, which coincides with the multiple Wiener-It\^o stochastic integral in this case.  In our proof, convergence of finite-dimensional distributions follows from a multi-dimensional version of the Fourth Moment Theorem \cite{NP05, PT}, which gives conditions for weak convergence to a Gaussian random variable (see section 2.4).  Functional convergence to a Brownian motion is proved by investigating tightness.  In addition to the proof, we are able to comment on the rate of convergence (which is fairly slow: $\sim (\log k)^{-\frac 12}$), using a result from Nourdin and Peccati \cite{NoP11} in their recent book on the Stein method.

The original motivation for this paper was \cite{BN06}, where Baudoin and Nualart studied a complex-valued fBm with $H > 1/2$.  For $B = B_t^1 + iB_t^2$, $B_0=1$, they studied the integral
\beq{BN_1} \int_0^t \frac{dB_s}{B_s}\;=\;\int_0^t \frac{B_s^1dB_s^1+B_s^2dB_s^2}{|B_s|^2}~+~i\int_0^t\frac{B_s^2dB_s^1-B_s^1dB_s^2}{|B_s|^2}.\eeq
When $B$ is written in the form $\rho_te^{i\theta_t}$, the angle $\theta_t$ is given by the imaginary part of \req{BN_1}.  For standard Brownian motion, a well-known theorem by Spitzer \cite{Spitzer} holds that as $t\to \infty$, the random variable $2\theta_t/(\log t)$ converges in distribution to a Cauchy random variable with parameter 1.  We are not aware of a corresponding fBm version of Spitzer's theorem.  In \cite{BN06}, the functional
\beq{BN_2} Z_t := \int_1^t\frac{B_s^2dB_s^1-B_s^1dB_s^2}{s^{2H}}\eeq
was proposed as an asymptotic approximation for $\theta_t$.  It was shown (see Proposition 22 of \cite{BN06}) that $\frac{Z_t}{\sqrt{\log t}}$ converges in distribution to a Gaussian random variable, with an expression for variance similar to our own result.  Their proof also used Malliavin calculus, but did not use the Fourth Moment Theorem.  For $q=2$, since $B_t = \int_0^t dB_s$, $Z_t$ is asymptotically equal in law to
\[Z_t' = \int_1^t\int_1^s \frac{dB_r^2 dB_s^1}{s^{2H}} - \int_1^t\int_1^s \frac{dB_r^1dB_s^2}{s^{2H}},\]
 and we have a new (and shorter) proof of the result in \cite{BN06}.

The organization of this paper is as follows.  In Section 2, we give necessary details about the theoretical background; this includes a brief discussion of Malliavin calculus, and some remarks about integration with respect to fBm, which has been studied extensively elsewhere.  The Fourth Moment Theorem is also given.  In Section 3, we state and prove the main result, which is Theorem 3.2.  The proof follows in Sections 3.1 through 3.3; then Section 3.4 discusses the rate of convergence.  Section 4 contains a technical lemma which was put at the end due to length.    

\section{Notation and Theory}

We use the symbol ${\mathbf 1}_A$ for the indicator function of a set $A$.  Given a real $n$-tuple ${\mathbf x} := (x_1, \dots, x_n)$, we can re-arrange the variables in increasing order.  We denote this re-ordered $n$-tuple $(x_{(1)}, \dots, x_{(n)})$.  For a stochastic process $X = \{X(a), a\in {\cal I}\}$, we will use the notation $X_a$ and $X(a)$ interchangeably.  The symbol $C$ denotes a generic positive constant, which may change from line to line.

\subsection{Elements of Malliavin calculus}
Following is a brief description of some identities that will be used in the paper.  The reader may refer to  \cite{Nualart} for detailed coverage of this topic.  
Let $X = \{ X(h), h\in\cal{H}\}$ be an {\em isonormal Gaussian process} on a probability space $( \Omega, {\cal F}, P )$, and indexed by a Hilbert space $\cal{H}$.  That is, $X$ is a family of Gaussian random variables such that ${\mathbb E}[ X(h)] =0$ and ${\mathbb E}\left[X(h)X(g)\right] = \left< h,g\right>_{\cal{H}}$ for all $h,g \in\cal{H}$.
  
For integers $q \ge 1$, let ${\cal H}^{\otimes q}$ denote the $q^{th}$ tensor product of ${\cal H}$.  We use ${\cal H}^{\odot q}$ to denote the symmetric tensor product.  Given a function $f \in {\cal H}^{\otimes q}$, we define the symmetrization $\tilde f \in {\cal H}^{\odot q}$ as
\beq{symmetrize} \tilde{f}(x_1, \dots, x_q) = \frac{1}{q!}\sum_{\sigma} f(x_{\sigma(1)}, \dots , x_{\sigma(q)}),\eeq
where $\sigma$ includes all permutations of $\{ 1, \dots, q\}$.

Let $\{ e_n, n\ge 1\}$ be a complete orthormal system in ${\cal H}$.  For functions $f, g \in {\cal H}^{\odot q}$ and $p\in\{0, \dots, q\}$, we define the $p^{th}$-order contraction of $f$ and $g$ as that element of ${\cal H}^{\otimes 2(q-p)}$ given by
\beq{contract} f\otimes_p g = \sum_{i_1, \dots , i_p=1}^\infty \left< f, e_{i_1} \otimes \cdots\otimes e_{i_p}\right>_{{\cal H}^{\otimes p}} \otimes \left< g, e_{i_1} \otimes \cdots\otimes e_{i_p}\right>_{{\cal H}^{\otimes p}}\eeq
where $f\otimes_0 g = f\otimes g$ and $f\otimes_q g = \left< f,g\right>_{{\cal H}^{\otimes q}}$.  While $f$ and $g$ are both symmetric, the contraction may not be.  We denote its symmetrization by $f\stackrel{\sim}{\otimes}_p g$.

Let ${\cal H}_q$ be the $q^{th}$ Wiener chaos of $X$, that is, the closed linear subspace of $L^2(\Omega)$ generated by the random variables $\{ H_q(X(h)), h \in {\cal H}, \|h \|_{\cal H} = 1 \}$, where $H_q(x)$ is the $q^{th}$ Hermite polynomial, defined as
$$H_q (x) = {(-1)^q}e^{\frac{x^2}{2}}\frac{d^q}{dx^q}e^{-\frac{x^2}{2}}.$$  For $q \ge 1$, it is known that the map 
\beq{Hmap} I_q(h^{\otimes q}) = H_q(X(h))\eeq
provides a linear isometry between the symmetric product space ${\cal H}^{\odot q}$ (equipped with the modified norm $\sqrt{q!}\| \cdot\|_{{\cal H}^{\otimes q}}$) and ${\cal H}_q$, where $I_q(\cdot)$ is the Wiener-It\^o stochastic integral.  By convention, ${\cal H}_0 = \mathbb{R}$ and $I_0(x) = x$.  It follows from \req{Hmap} and the properties of the Hermite polynomials that for $f,g \in {\cal H}^{\odot q}$ we have
\beq{I_q_norm} {\mathbb E}\left[ I_q(f) I_q(g)\right] = q!\left< f,g\right>_{{\cal H}^{\otimes q}}.\eeq

Let $\cal S$ be the set of all smooth and cylindrical random variables of the form $F = g(X(\phi_1), \dots, X(\phi_n))$, where $n \ge 1$; $g: {\mathbb R}^n \to {\mathbb R}$ is an infinitely differentiable function with compact support, and $\phi_i \in {\cal H}$. The Malliavin derivative of $F$ with respect to $X$ is the element of $L^2(\Omega, {\cal H})$ defined as
$$DF = \sum_{i=1}^n \frac{\partial g}{\partial x_i}(X(\phi_1), \dots, X(\phi_n)) \phi_i.$$
By iteration, for any integer $q >1$ we can define the $q^{th}$ derivative $D^qF$, which is an element of $L^2(\Omega, {\cal H}^{\odot q})$.

We let ${\mathbb D}^{q,2}$ denote the closure of $\cal S$ with respect to the norm $\| \cdot \|_{{\mathbb D}^{q,2}}$ defined as 
$$\| F \|_{{\mathbb D}^{q,2}}^2 = {\mathbb E}\left[ F^2\right] + \sum_{i=1}^q {\mathbb E}\left[ \| D^iF \|_{{\cal H}^{\otimes i}}^2 \right].$$
We denote by $\delta$ the Skorohod integral, which is defined as the adjoint of the operator $D$.  A random element $u \in L^2(\Omega, {\cal H})$ belongs to the domain of $\delta$, Dom $\delta$, if and only if,
$$\left| {\mathbb E}\left[ \left< DF, u\right>_{\cal H}\right] \right| \le c_u \|F\|_{L^2(\Omega)}$$
for any $F \in {\mathbb D}^{1,2}$, where $c_u$ is a constant which depends only on $u$.  If $u \in $ Dom $\delta$, then the random variable $\delta (u) \in L^2(\Omega)$ is defined for all $F \in {\mathbb D}^{1,2}$ by the duality relationship,
$${\mathbb E}\left[ F\delta(u) \right] = {\mathbb E}\left[ \left< DF, u \right>_{\cal H} \right].$$
This is sometimes called the Malliavin integration by parts formula.  We iteratively define the multiple Skorohod integral for $q \ge 1$ as $\delta (\delta^{q-1}(u))$, with $\delta^0(u) = u$.  For this definition we have,
$${\mathbb E}\left[ F\delta^q(u) \right] = {\mathbb E}\left[ \left< D^qF, u \right>_{\hten^{\otimes q}} \right],$$
where $u \in$ Dom $\delta^q$ and $F \in {\mathbb D}^{q,2}$.  The adjoint operator $\delta^q$ is an integral in the sense that for a (non-random) $h \in {\cal H}^{\odot q}$, we have $\delta^q(h) = I_q(h)$. 

We will use the following hypercontractivity property of iterated integrals (see \cite{NP05}, Theorem 2.7.2, or \cite{Nualart}, Sec. 1.4.3 for complete details).  Let $f \in {\cal H}^{\odot q}$ and $p \ge 2$.  Then there exists a positive constant $C_{p,q} <\infty$, depending only on $p$ and $q$, such that
\beq{hyper} {\mathbb E}\left[ |I_q(f)|^p\right] \le C_{p,q}\left({\mathbb E}\left[ I_q(f)^2\right] \right)^{\frac p2}.\eeq
 
\subsection{Fractional Brownian motion}
Fix $T >0$ and an integer $d\ge 1$.  Let $B = \{B_t, 0\le t\le T\} = (B_t^1, \dots, B_t^d)$ be a $d$-dimensional fBm, that is, each $B_t^i$ is an independent, centered Gaussian process with $B_0^i = 0$ and covariance
\[{\mathbb E}\left[ B_s^i B_t^i\right] := R(s,t) = \frac 12 \left( s^{2H}+t^{2H}-|s-t|^{2H}\right)\]
for $t, s \ge 0$.  We assume that $\frac 12 < H < 1$.  
We will use the following elementary properties of $R(s,t)$:
\begin{enumerate}[(R.1)]
\item  $R(s,t) = R(t,s)$; and for any $\epsilon >0$,  $R(s+\epsilon,t) \ge R(s,t)$.
\item There are constants $1\le c_0 < c_1 \le  2$ such that $c_0 (st)^H \le R(s,t) \le c_1 (st)^H$.
\item As an alternate bound, if $s \le t$ then the Mean Value Theorem implies
\[R(s,t) \le s^{2H} + t^{2H} - (t-s)^{2H} \le s^{2H} + st^{2H-1}.\]
\end{enumerate}

Let $\cal E$ denote the set of ${\mathbb R}-$valued step functions on $[0,T]\times \{1,\dots,d\}$.  Note that any $f = f(t,i) \in \cal E$ may be written as a linear combination of elementary functions $e^k_t = {\mathbf 1}_{[0,t]\times\{k\}}$.  Let $\hten_d$ be the Hilbert space defined as the closure of $\cal E$ with respect to the inner product
\[ \left< e_s^k , e^j_t \right>_{\hten_d} = {\mathbb E}[ B_s^k B_t^j] = R(s,t)\delta_{kj},\]
where $\delta_{kj}$ is the Kronecker delta.  The mapping $e^k_t \mapsto B^k(t)$ can be extended to a linear isometry between $\hten_d$ and the Gaussian space spanned by $B$.  In this way, $\{ B(h), h \in \hten_d\}$ is an isonormal Gaussian process as in Section 2.1.

Let $\alpha_H = H(2H-1)$.  It is well known that
we can write
\beq{Rxy}R(s,t) = \alpha_H\int_0^s\int_0^t |\eta-\theta|^{2H-2}d\eta ~d\theta .\eeq
Consequently, for $f,g \in {\cal E}$ we can write
\beq{scalar_prod} \left< f, g\right>_{\hten_d} = \alpha_H \sum_{i=1}^d \int_0^T\int_0^T f(s,i)g(t,i)|t-s|^{2H-2}ds~dt.\eeq

We recall (see \cite{Nualart}, Sec. 5.1.3) that $\hten_d$ contains the linear subspace of measurable, ${\mathbb R}$-valued functions $\varphi$ on $[0,T]\times \{1,\dots, d\}$ such that
\[ \sum_{i=1}^d\int_0^T\int_0^T |\varphi(s,i)|~|\varphi(t,i)|~|t-s|^{2H-2}ds~dt < \infty.\]
We denote this space $|\hten_d|$.  Let $|\hten_d^{q,s}|$ be the space of symmetric functions $f: \left( [0,T]\times\{1,\dots, d\}\right)^q \to {\mathbb R}$ such that
\[ \sum_{i_1, \dots, i_q =1}^d \int_{[0,T]^{2q}} |f\left((\eta_1,i_1),\dots,(\eta_q,i_q)\right)|~|f\left((\theta_1,i_1),\dots,(\theta_q,i_q)\right)|~|\eta-\theta|^{2H-2}d\eta~d\theta~<\infty.\]
Then $|\hten_d^{q,s}|\subset \hten^{\odot q}$, and for $f,g \in |\hten_d^{q,s}|$ we can  write \req{contract} as
\beq{contract_int} f \otimes_p g = \alpha_H^p \sum_{k=1}^d \int_{[0,T]^{2p}} f\left(({\mathbf \eta},k),({\mathbf t_1},{\mathbf i_1})\right)g\left(({\mathbf \theta},k),({\mathbf t_2},{\mathbf i_2})\right) \prod_{j=1}^p |\eta_j - \theta_j|^{2H-2} d{\mathbf \eta}~d{\mathbf \theta},\eeq  
where \begin{align*}({\mathbf \eta},k) &= (\eta_1, k),\dots, (\eta_p,k);\; ({\mathbf \theta},k) = (\theta_1, k),\dots, (\theta_p,k);\;({\mathbf t_1},{\mathbf i_1}) = (t_1, i_1), \dots, (t_{q-p},i_{q-p}); \text{ and }\\({\mathbf t_2},{\mathbf i_2}) &= (t_{q-p+1}, i_{q-p+1}), \dots, (t_{2(q-p)},i_{2(q-p)}).\end{align*}

\subsection{Stochastic integration with respect to fBm}
Let  $F = g(B(\phi_1), \dots, B(\phi_n))$, where $n \ge 1$, $g: {\mathbb R}^n \to {\mathbb R}, \phi_i \in \hten_d$, and $g$ is a smooth function as in Section 2.1.  The Malliavin derivative of $F$ is an element of $\hten_d$ (which is isomorphic to the product space $(\hten_1)^d$), and we can write $D = (D^{(1)}, \dots,D^{(d)})$, where
\[D^{(i)}_t F = \sum_{j=1}^n \frac{\partial g}{\partial x_j}\left(B(\phi_1),\dots,B(\phi_n)\right)\phi_j(t,i),\]
where we use the notation $D_t^{(i)}F = D^{(i)}F(t)$.  We define the `component integral' $\delta^{(i)}$ as the adjoint of $D^{(i)}$, and use the notation
\beq{comp_int}\delta^{(i)}(u) = \int_0^T u_t \delta B^i_t;\;\;\text{ and}\eeq
\[\delta(u) = \int_0^T u_t \delta B_t  = \sum_{i=1}^d \delta^{(i)}(u).\]
where $u\in \text{Dom}~\delta^{(i)} \subset L^2(\Omega,\hten_1)$ for every $i=1,\dots, d$ implies $u\in \text{Dom}~\delta \subset L^2(\Omega,\hten_d)$. 

\medskip
The pathwise stochastic integral with respect to fBm with $H > 1/2$ has been studied extensively \cite{AN03,  Duncan, Nualart}.  For our purposes, we will use the symmetric Stratonovich integral discussed by Russo and Vallois \cite{RV93}:
\begin{definition}
For some $T > 0$, let $u = \{ u_t, 0\le t\le T\}$ be a stochastic process with integrable trajectories.  The symmetric integral with respect to the fBm $B$ is defined as 
\[\int_0^t u_s dB_s = \lim_{\varepsilon \downarrow 0} \frac{ 1}{2\varepsilon} \int_0^t u_s\left(B_{(s+\varepsilon)\wedge t} - B_{(s-\varepsilon)\vee 0}\right)ds,\]
where the limit exists in probability.\end{definition}

This theorem was first proved in \cite{AN03}.
\begin{theorem} Let $u = \{u_t, t\ge 0\}$ be a stochastic process in ${\mathbb D}^{1,2}(\hten_1)$ such that, for some $T > 0$,
\begin{align*}
&{\mathbb E} \left[ \int_0^T\int_0^T |u_t|~|u_s|~|t-s|^{2H-2}~ds~dt\right] < \infty;\\
&{\mathbb E}\left[ \int_{[0,T]^4} |D_tu_\theta |~|D_s u_\eta |~|t-s|^{2H-2}~|\theta-\eta |^{2H-2}du~dt~d\theta~d\eta\right] < \infty;\\
&\text{and }\;  \int_0^T\int_0^T |D_su_t|~|t-s|^{2H-2}~ds~dt < \infty\;\text{a.s.}\end{align*}
Then the limit of definition 2.1 exists in probability, and we have
\[ \int_0^T u_t dB_t = \int_0^T u_t \delta B_t + \alpha_H \int_0^T\int_0^T D_su_t |t-s|^{2H-2}ds~dt,\]
where $\alpha_H = H(2H-1)$.\end{theorem}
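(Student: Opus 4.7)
The plan is to insert the Malliavin product rule into the Russo--Vallois Riemann approximation of Definition 2.1, split the resulting expression into a divergence piece and a ``trace'' piece, and pass to the limit in each separately using the three integrability hypotheses. For $s\in[0,T]$ set $h_s^\varepsilon := \mathbf{1}_{[(s-\varepsilon)\vee 0,(s+\varepsilon)\wedge T]} \in \hten_1$, so that $B(h_s^\varepsilon) = B_{(s+\varepsilon)\wedge T} - B_{(s-\varepsilon)\vee 0}$. Applying the identity $F B(h) = \delta(F h) + \langle DF,h\rangle_{\hten_1}$ pointwise in $s$ and then using the commutation of $\delta$ with Lebesgue integration yields
\begin{equation*}
\frac{1}{2\varepsilon}\int_0^T u_s \bigl(B_{(s+\varepsilon)\wedge T} - B_{(s-\varepsilon)\vee 0}\bigr)\,ds \;=\; \delta(u^\varepsilon) \;+\; T^\varepsilon,
\end{equation*}
where $u^\varepsilon_t := \frac{1}{2\varepsilon}\int_{(t-\varepsilon)\vee 0}^{(t+\varepsilon)\wedge T} u_s\,ds$ is the Steklov average of $u$ and $T^\varepsilon := \frac{1}{2\varepsilon}\int_0^T \langle Du_s,h_s^\varepsilon\rangle_{\hten_1}\,ds$.

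For the divergence piece, I would show that $u^\varepsilon \to u$ in the graph norm of $\mathbb{D}^{1,2}(\hten_1)$. Using the kernel representation \req{scalar_prod}, both $\mathbb{E}\|u - u^\varepsilon\|^2_{\hten_1}$ and $\mathbb{E}\|Du - Du^\varepsilon\|^2_{\hten_1^{\otimes 2}}$ expand as iterated integrals against $|t-s|^{2H-2}$, and the first two hypotheses furnish the integrable dominating functions needed to run a standard Lebesgue-differentiation/dominated-convergence argument for the Steklov averages. Since $\delta$ is continuous from $\mathbb{D}^{1,2}(\hten_1)$ into $L^2(\Omega)$, it follows that $\delta(u^\varepsilon)\to \delta(u) = \int_0^T u_t\,\delta B_t$ in $L^2(\Omega)$.

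For the trace piece, applying \req{scalar_prod} once more gives
\begin{equation*}
T^\varepsilon = \alpha_H \int_0^T\!\!\int_0^T D_t u_s\, \Phi_\varepsilon(s,t)\,dt\,ds, \qquad \Phi_\varepsilon(s,t) := \frac{1}{2\varepsilon}\int_{(s-\varepsilon)\vee 0}^{(s+\varepsilon)\wedge T}|t-r|^{2H-2}\,dr.
\end{equation*}
Lebesgue differentiation gives $\Phi_\varepsilon(s,t)\to |t-s|^{2H-2}$ for a.e.\ $(s,t)$, and a direct computation shows $\Phi_\varepsilon(s,t)\le C|t-s|^{2H-2}$ on $\{|t-s|>\varepsilon\}$; the third hypothesis then lets dominated convergence send $T^\varepsilon$ to $\alpha_H\int_0^T\int_0^T D_t u_s\,|t-s|^{2H-2}\,dt\,ds$ almost surely, which after relabelling the mute variables matches the trace term in the statement.

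The main technical obstacle is that the two pieces naturally converge in different modes: $\delta(u^\varepsilon)\to\delta(u)$ in $L^2(\Omega)$, while the cleanest argument for $T^\varepsilon$ is pathwise via dominated convergence, with the second integrability hypothesis precisely what upgrades the pointwise statement to $L^1(\Omega)$ control when that is needed. Summing the two limits yields convergence in probability of the defining Riemann sum, which is exactly the mode Definition 2.1 requires. A secondary issue is the book-keeping of the boundary contributions when $s$ lies within $\varepsilon$ of $0$ or $T$ and $h_s^\varepsilon$ has length less than $2\varepsilon$; these occupy a set of $s$ of measure $O(\varepsilon)$, and their contribution is absorbed by the Hilbert-space estimates coming from the first hypothesis, leaving the limit unchanged.
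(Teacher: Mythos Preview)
The paper does not supply its own proof of this theorem: immediately before the statement it says ``This theorem was first proved in \cite{AN03},'' and no argument follows. Your sketch is essentially the proof given in that reference (and reproduced in \cite{Nualart}, Proposition~5.2.3): decompose the Russo--Vallois approximant via the Malliavin product rule into a Skorohod piece $\delta(u^\varepsilon)$ and a trace piece, then pass to the limit in each using the three hypotheses exactly as you describe. So your proposal is correct and matches the cited source; there is simply nothing in the present paper to compare it against.

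One small remark on the estimate $\Phi_\varepsilon(s,t)\le C|t-s|^{2H-2}$: the restriction to $\{|t-s|>\varepsilon\}$ is unnecessary. On $\{|t-s|\le 2\varepsilon\}$ one has $\Phi_\varepsilon(s,t)\le C\varepsilon^{2H-2}\le C'|t-s|^{2H-2}$ since $2H-2<0$, so the bound holds globally and the dominated-convergence step goes through without a separate treatment of the near-diagonal.
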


\subsection{The Fourth Moment Theorem}

\begin{theorem} Fix integers $n \ge 2$ and $d \ge 1$.  Let $\left\{ \left(f_1^{(k)},\dots ,f_d^{(k)}\right), k\ge 1\right\}$ be a sequence of vectors such that $f_i^{(k)} \in {\cal H}^{\odot n}$ for each $k$ and $i = 1, \dots, d$; and 
\begin{align*} \lim_{k \to \infty} n! \| f_i^{(k)}\|_{{\cal H}^{\otimes n}}^2 = \lim_{k \to \infty}\left\| I_n\left(f_i^{(k)}\right)\right\|_{L^2(\Omega)}^2 &= 
C_{ii}, \; \forall i = 1, \dots , d;\\
\lim_{k \to \infty} {\mathbb E} \left[ I_n\left(f_i^{(k)}\right) I_n\left(f_j^{(k)}\right)\right] &= C_{ij},\; \forall 1 \le i < j \le d.\end{align*}

Then the following are equivalent:
\begin{enumerate}[(i)]
\item As $k \to \infty$, the vector $\left( I_n(f_1^{(k)}),\dots, I_n(f_d^{(k)})\right)$ converges in distribution to a d-dimensional Gaussian vector with distribution ${\cal N}(0,{\mathbf C}_d)$;
\item  For each $i = 1, \dots, d$, $I_n(f_i^{(k)})$ converges in distribution to $N_i$, where $N_i$ is a centered Gaussian random variable with variance $C_{ii}$;
\item  For each $i = 1, \dots, d$,
\[ \lim_{k\to \infty} {\mathbb E}\left[ I_n\left( f_i^{(k)}\right)^4\right] = 3C_{ii}^2;\]
\item  For each $i = 1, \dots, d$, and each integer $1\le p \le n-1$,
 $\lim_{k \to \infty} \left\| f_i^{(k)} \otimes_p f_i^{(k)} \right\|_{{\cal H}^{\otimes 2(n-p)}} =0$.
\end{enumerate}\end{theorem}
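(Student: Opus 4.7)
The plan is to establish the cyclic chain of implications (i) $\Rightarrow$ (ii) $\Rightarrow$ (iii) $\Leftrightarrow$ (iv) $\Rightarrow$ (i). The step (i) $\Rightarrow$ (ii) is immediate since marginal distributions inherit weak convergence from the joint law.

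For (ii) $\Rightarrow$ (iii), I would apply the hypercontractivity estimate \req{hyper}, which controls every $L^p$ norm of an element of the $n$th chaos by a constant times its $L^2$ norm. Since $\mathbb{E}[I_n(f_i^{(k)})^2]$ is bounded by hypothesis, the family $\{I_n(f_i^{(k)})^4\}_k$ is uniformly integrable. Combined with the assumed distributional convergence to $N_i \sim \mathcal{N}(0,C_{ii})$, this upgrades weak convergence to moment convergence, yielding $\mathbb{E}[I_n(f_i^{(k)})^4] \to \mathbb{E}[N_i^4] = 3C_{ii}^2$.

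The equivalence (iii) $\Leftrightarrow$ (iv) is the heart of the classical Nualart-Peccati theorem, and I would prove it using the multiplication formula for iterated Wiener-It\^o integrals,
\[ I_n(f)\,I_n(g) = \sum_{p=0}^{n} p!\binom{n}{p}^{2}I_{2n-2p}(f\,\tilde\otimes_p\,g). \]
Setting $f = g = f_i^{(k)}$, squaring, taking expectation, and invoking the isometry \req{I_q_norm}, one arrives at an identity of the form
\[ \mathbb{E}\bigl[I_n(f_i^{(k)})^4\bigr] = 3\bigl(n!\,\|f_i^{(k)}\|_{\mathcal{H}^{\otimes n}}^{2}\bigr)^{2} + \sum_{p=1}^{n-1} a_{n,p}\,\bigl\|f_i^{(k)}\,\tilde\otimes_p\,f_i^{(k)}\bigr\|_{\mathcal{H}^{\otimes 2(n-p)}}^{2}, \]
with explicit positive combinatorial constants $a_{n,p}$. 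Since each term in the sum is non-negative and the first term tends to $3C_{ii}^2$ by hypothesis, (iii) is equivalent to the vanishing of every symmetrized contraction $\|f_i^{(k)}\,\tilde\otimes_p\,f_i^{(k)}\|$; as these and the unsymmetrized norms $\|f_i^{(k)}\otimes_p f_i^{(k)}\|$ vanish together (the symmetrization is a contraction in $L^2$, and for symmetric $f$ the two differ only by a combinatorial constant), this is condition (iv).

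The remaining implication (iv) $\Rightarrow$ (i) is the Peccati-Tudor step and will be the main obstacle, since one has to promote marginal Gaussian limits to a joint one with the prescribed covariance $\mathbf{C}_d$. The essential new ingredient is that vanishing of the diagonal contractions forces the cross contractions to vanish: for $i\ne j$ and $1\le p\le n-1$, the algebraic identity
\[ \bigl\|f_i^{(k)}\otimes_p f_j^{(k)}\bigr\|_{\mathcal{H}^{\otimes 2(n-p)}}^{2} = \bigl\langle f_i^{(k)}\otimes_{n-p} f_i^{(k)},\, f_j^{(k)}\otimes_{n-p} f_j^{(k)}\bigr\rangle_{\mathcal{H}^{\otimes 2p}} \]
together with Cauchy-Schwarz reduces this to the diagonal norms handled by (iv). Joint Gaussian convergence then follows from a characteristic-function argument built on Malliavin integration by parts: setting $\varphi_k(\lambda) = \mathbb{E}[\exp(i\sum_j \lambda_j I_n(f_j^{(k)}))]$ and differentiating in $\lambda_\ell$, the duality $\mathbb{E}[F\delta(u)] = \mathbb{E}[\langle DF,u\rangle_{\mathcal{H}}]$ reduces $\partial_{\lambda_\ell}\varphi_k$ to the Ornstein-Uhlenbeck form $-\sum_j C_{\ell j}^{(k)}\lambda_j\varphi_k + R_k(\lambda)$, where $C_{\ell j}^{(k)} = \mathbb{E}[I_n(f_\ell^{(k)})I_n(f_j^{(k)})] \to C_{\ell j}$ and the remainder $R_k$ is controlled by the mixed and diagonal contraction norms shown above to vanish. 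Passing to the limit in this ODE gives the Gaussian characteristic function $\exp(-\tfrac{1}{2}\langle\lambda,\mathbf{C}_d\lambda\rangle)$, completing the proof.
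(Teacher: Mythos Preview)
The paper does not prove this theorem at all: it is stated as background (Theorem 2.3) and attributed to \cite{NP05} for the one-dimensional equivalence (ii)--(iv) and to Peccati--Tudor \cite{PT} for the multivariate upgrade (iv) $\Rightarrow$ (i), with the additional references \cite{NoP11, NOrtiz}. So there is no ``paper's own proof'' to compare against; your sketch is in effect reproducing the argument from those references, and in broad outline it does so correctly. The chain (i) $\Rightarrow$ (ii) $\Rightarrow$ (iii) via hypercontractivity and uniform integrability, the product-formula computation of the fourth moment, and the Malliavin/characteristic-function route for (iv) $\Rightarrow$ (i) (closer to the Nualart--Ortiz-Latorre version) are all standard and sound.

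There is one genuine inaccuracy. In arguing (iii) $\Leftrightarrow$ (iv) you assert that for symmetric $f$ the symmetrized and unsymmetrized contraction norms ``differ only by a combinatorial constant.'' That is false: symmetrization is a genuine averaging and can strictly decrease the norm, so vanishing of $\|f\,\tilde\otimes_p\,f\|$ does not by itself force $\|f\otimes_p f\|\to 0$. The correct bridge is more delicate: one expands $(2n-2p)!\,\|f\,\tilde\otimes_p\,f\|^2$ as a sum over permutations and shows it dominates a positive multiple of $\|f\otimes_p f\|^2$ (equivalently, one derives a second expression for $\mathbb{E}[I_n(f)^4]-3\sigma^4$ directly in terms of the unsymmetrized norms, using the identity $\|f\otimes_p f\|^2=\|f\otimes_{n-p} f\|^2$ for symmetric $f$). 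This is exactly the content of, e.g., Lemma 5.2.4 and surrounding computations in \cite{NoP11}. Fix that step (or simply cite it) and your sketch matches the literature proof the paper is invoking.
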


This first version (which was 1-dimensional) of this theorem was proved in \cite{NP05}.  Since then, other equivalent conditions have been added  \cite{NoP11, NOrtiz}.  The multi-dimensional version stated above was proved by Peccati and Tudor \cite{PT}.  A key advantage of this theorem is that, unlike the standard method of moments, it is not necessary to know about moments of any order higher than four.
 
\medskip
\section{Main result}
Fix $q \ge 2$.  For $t > 0$ and integer $k \ge 2$, define
\[
Y_{k^t} = \int_1^{k^t} \int_1^{s_q}\cdots\int_1^{s_2} s_q^{-qH} dB_{s_1}^1\dots dB_{s_{q-1}}^{q-1}~dB_{s_q}^q, \]
where the stochastic integrals are iterated symmetric integrals in the sense of Definition 2.1.  Theorem 2.2 and the diagonal structure of $Y_{k^t}$ allow us to identify the pathwise and Skorohod integrals.

\begin{lemma} For each $q \ge 2$, we have
\beq{delta_Ykt}Y_{k^t}= \int_1^{k^t} \int_1^{s_q} \cdots \int_1^{s_2} s_q^{-qH}\delta B_{s_1}^1 \dots \delta B_{s_{q-1}}^{q-1}~\delta B_{s_q}^q.\eeq
\end{lemma}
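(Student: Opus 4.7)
The plan is to prove \req{delta_Ykt} by induction on the depth of iteration, working from the innermost integral outward. The structural fact that makes the argument essentially cost-free is that at level $j$ the integrator is $B^j$, while the accumulated integrand depends only on $B^1,\ldots,B^{j-1}$, which are independent of $B^j$; this independence will force the drift correction produced by Theorem 2.2 to vanish at every level.

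For the base level, the innermost integral $\int_1^{s_2} s_q^{-qH}\,dB^1_{s_1}$ has a deterministic integrand, so the symmetric and divergence integrals trivially coincide, both equalling $s_q^{-qH}(B^1_{s_2}-B^1_1)$. For the inductive step, suppose we have already identified
\[
u^{(j-1)}_{s_j,\ldots,s_q} := \int_1^{s_j}\!\!\cdots\!\int_1^{s_2} s_q^{-qH}\,dB^1_{s_1}\cdots dB^{j-1}_{s_{j-1}}
\]
with the corresponding $(j-1)$-fold iterated Skorohod integral. Because $u^{(j-1)}$ is built from $B^1,\ldots,B^{j-1}$ alone, one has $D^{(j)} u^{(j-1)}\equiv 0$. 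Applying Theorem 2.2 componentwise, with respect to the isonormal process $B^j$, to the outer symmetric integral $\int_1^{s_{j+1}} u^{(j-1)}_{s_j,\ldots,s_q}\,dB^j_{s_j}$ then produces the divergence integral plus a correction of the form $\alpha_H\iint D^{(j)}_s u^{(j-1)}_t\,|t-s|^{2H-2}\,ds\,dt$, which vanishes identically. Combined with the iterative definition of the multiple Skorohod integral as the composition $\delta^{(j)}\circ\cdots\circ\delta^{(1)}$, this advances the identity by one level, and $j=q$ produces \req{delta_Ykt}.

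The main obstacle is the verification, at each level $j$, of the three integrability hypotheses of Theorem 2.2 for the process $s_j\mapsto u^{(j-1)}_{s_j,\ldots,s_q}$: the $\mathbb{D}^{1,2}(\hten_1)$-regularity together with the $L^1$ bound on $|u_s||u_t||t-s|^{2H-2}$, the fourth-order bound on $|D_t u_\theta||D_s u_\eta||t-s|^{2H-2}|\theta-\eta|^{2H-2}$, and the almost-sure integrability of $|D_s u_t||t-s|^{2H-2}$. Since $u^{(j-1)}$ is a multiple Wiener--It\^o integral in the $(j-1)$st chaos of $B^1,\ldots,B^{j-1}$ with an explicit symmetric kernel supported on the simplex, these checks reduce to routine moment estimates: the hypercontractivity bound \req{hyper} controls $L^p$ norms by $L^2$ norms; property (R.2) together with the contraction formula \req{contract_int} provides uniform control of the relevant $\hten$-inner products of kernels; and the decay factor $s_q^{-qH}$ together with the ordering $1\le s_1<\cdots<s_q\le k^t$ keeps everything finite on the bounded interval $[1,k^t]$. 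These estimates are unremarkable individually, but the bookkeeping of reproducing them at every level of the induction is where the only genuine labor lies.
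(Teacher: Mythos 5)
Your proposal is correct and follows essentially the same route as the paper: the paper's proof is precisely an iterated application of Theorem 2.2 in which the correction term $\alpha_H\int\int D^{(j)}_s u_t\,|t-s|^{2H-2}\,ds\,dt$ vanishes by independence of $B^j$ from the integrand, yielding $Y_{k^t}=\delta^{(q)}\cdots\delta^{(1)}\bigl(s_q^{-qH}{\mathbf 1}_{\{1\le s_1<\dots<s_q\le k^t\}}\bigr)$. Your added discussion of the integrability hypotheses is a more careful spelling-out of what the paper leaves implicit, not a different argument.
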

\begin{proof}  This follows from iterated application of Theorem 2.2, where the correction term is zero due to independence.  Indeed, in the notation of \req{comp_int}, this is
\[Y_{k^t} = \delta^{(q)}\cdots \delta^{(1)}\left( s_q^{-qH} {\mathbf 1}_{\{1\le s_1 < \dots < s_q\le  k^t\}}\right).\]
\end{proof}

\medskip
Following is the main result of this section. 
\begin{theorem}For $t \ge 0$, define
\[X_k(0) =0; \quad
X_k (t) = \frac{Y_{k^t}}{\sqrt{\log k}}, \;t > 0.\]
Then as $k \to \infty$, the family $\{ X_k(t), t\ge 0\}$ converges in distribution to the process $X = \{ X(t), t\ge 0\}$, where $X$ is a scaled Brownian motion with variance $\sigma_q^2 t$, and
\beq{sigma_2}  \sigma_2^2 = \alpha_H \int_0^1 x^{-2H} R(1,x) (1-x)^{2H-2} dx; \;\text{ and for }q> 2,\eeq
\beq{sigma_q} \sigma_q^2 =\alpha_H^{q-1} \int_0^1 x_q^{-qH} (1-x_q)^{2H-2}\int_{\cal M} R(x_2, y_2)\prod_{i=2}^{q-1} |x_i-y_i|^{2H-2}~dx_2~dy_2 \dots dy_{q-1}~dx_q,\eeq
\[\text{where }\;{\cal M} = \{ 0\le x_2 < \dots < x_q; 0\le y_2 < \dots < y_{q-1}\le 1\}.\] 
\end{theorem}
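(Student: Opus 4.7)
The plan is to recast $Y_{k^t}$ as a multiple Wiener--It\^o integral and apply the multidimensional Fourth Moment Theorem (Theorem 2.4) together with a tightness argument. By Lemma 3.1,
\[
 Y_{k^t} = I_q\bigl(f_t^{(k)}\bigr), \qquad f_t^{(k)}\bigl((s_1,1),\dots,(s_q,q)\bigr) = s_q^{-qH}\,\mathbf{1}_{\{1\le s_1<\dots<s_q\le k^t\}}
\]
(up to the symmetrization in the component labels), and since every time variable carries a distinct component, inner products of distinct permutations of $f_t^{(k)}$ vanish, which simplifies the norms considerably. I would prove the theorem in two stages: convergence of the finite-dimensional distributions of $X_k$, and tightness of $\{X_k\}$ in $C([0,T])$ for every $T>0$.

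For the finite-dimensional distributions, fix a partition $0=t_0<t_1<\cdots<t_m$ and consider the vector of increments $\Delta_k^{(j)} = X_k(t_j)-X_k(t_{j-1})$, each of which lies in the $q$-th Wiener chaos. Theorem 2.4 applied to this vector requires (i) $\mathbb{E}[\Delta_k^{(i)}\Delta_k^{(j)}] \to \sigma_q^2\,(t_i-t_{i-1})\,\delta_{ij}$ and (ii) the normalized contractions $\|f_t^{(k)}\otimes_p f_t^{(k)}\|_{\hten_q^{\otimes 2(q-p)}}^2 = o\bigl((\log k)^2\bigr)$ for each $p=1,\dots,q-1$ (and similarly for the cross-contractions between distinct increments). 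The variance $\mathbb{E}[Y_{k^t}^2]$ equals $\|f_t^{(k)}\|_{\hten_q^{\otimes q}}^2$ by the component-separation argument combined with the representation \req{scalar_prod}. The substitutions $s_i = k^t x_i$, $r_i = k^t y_i$, followed by the rescaling $x_i = x_q\xi_i$ and $y_i = y_q \eta_i$ for $i<q$, should collapse the variance integral to an $r^{-1}\,dr$ factor over $r=x_q\in[k^{-t},1]$, producing the desired $t\log k$ growth, while the remaining integrations converge to the constant in \req{sigma_q}. The cross-covariances between increments over disjoint time intervals should vanish because the kernels are then supported on well separated scales, so this logarithmic resonance does not occur.

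The contraction analysis is the technical heart of the proof: carrying out the same $k^t$-scaling inside \req{contract_int} and tracking how many of the surviving free variables can each contribute a logarithm to $\|f_t^{(k)}\otimes_p f_t^{(k)}\|^2$ is where the main effort lies, and I would expect this bookkeeping to be delegated to the technical lemma announced for Section 4. For tightness, the fact that $X_k(t)-X_k(s)$ lies in a fixed Wiener chaos combined with the hypercontractivity inequality \req{hyper} reduces matters to second moments: a bound $\mathbb{E}[(X_k(t)-X_k(s))^2]\le C|t-s|$ uniform in $k$, which should follow from the same scaling analysis as for the variance (together with properties (R.1)--(R.3) of $R$), yields $\mathbb{E}[|X_k(t)-X_k(s)|^p]\le C|t-s|^{p/2}$ for every $p\ge 2$, and Kolmogorov's criterion with $p$ large enough completes the argument.
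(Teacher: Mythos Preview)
Your overall strategy matches the paper's: express $Y_{k^t}$ as a $q$-th Wiener--It\^o integral, verify the covariance and contraction conditions of the multidimensional Fourth Moment Theorem, and obtain tightness from hypercontractivity together with a uniform second-moment bound on increments. Two technical points are worth flagging.

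First, your variance computation has a gap as written: after the rescalings $x_i = x_q\xi_i$ and $y_i = y_q\eta_i$ for $i<q$, the outer variables $x_q$ and $y_q$ remain coupled through $|x_q-y_q|^{2H-2}$ and through every factor $|x_q\xi_i - y_q\eta_i|^{2H-2}$, so no single $r^{-1}\,dr$ integral emerges. The paper avoids this by applying L'H\^opital's rule in $k$: differentiating the domain $[1,k^t]$ pins $s_q = k^t$ at the boundary, which removes one of the two outer variables, and then the rescaling $r_i\mapsto k^t x_i$, $s_i\mapsto k^t y_i$ yields the expression for $\sigma_q^2$ directly. (Your route can be salvaged by one further substitution $y_q = x_q w$ together with the symmetry $x_q\leftrightarrow y_q$, which does produce an $x_q^{-1}\,dx_q$ factor, but this is more bookkeeping than the paper's argument.)

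Second, by the Peccati--Tudor theorem (Theorem~2.3(iv)) only the self-contractions of each coordinate need to vanish, so your proposed cross-contraction estimates between distinct increments are unnecessary. For the same reason the paper works with the vector $(X_k(t_1),\dots,X_k(t_d))$ itself rather than with increments, and identifies the limiting covariance by showing directly that $(\log k)^{-1}\,{\mathbb E}\bigl[(Y_{k^t}-Y_{k^\tau})Y_{k^\tau}\bigr]\to 0$ for $\tau\le t$.
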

The proof of Theorem 3.2 follows the lemmas in Sections 3.1 and 3.2.  Our first task is to investigate the covariance (Section 3.1), then verify two other conditions for weak convergence (Section 3.2).
 
\medskip 
\subsection{Convergence of the covariance function}
Let $A = \left\{ 1\le s_1 < \dots < s_q \le k^t\right\}$, and $B = \left\{( i_1, \dots, i_q) = (1,\dots, q)\right\}$.  Lemma 3.1 allows us to write $Y_{k^t} = \delta^q(f_{k^t})$, where
$f_{k^t} : \left( [0,\infty) \times \{ 1, \dots, q\}\right)^q \to {\mathbb R}$ 
is given by
\beq{f_k_t}
f_{k^t}\left( (s_1, i_1), \dots, (s_q, i_q)\right) = s_q^{-qH} {\mathbf 1}_A(s_1,\dots,s_q){\mathbf 1}_B(i_1,\dots,i_q).\eeq
Here, $f_{k^t} \in \hten^{\otimes q}$, where $\hten := \hten_q$ is the Hilbert space associated with a $q-$dimensional fBm (see Section 2.2).
Clearly, $f_{k^t}$ is not symmetric.  Instead, we will work with the symmetrization defined in \req{symmetrize}:
\beq{consum}\tilde{f}_{k^t} =\frac{1}{q!}\sum_\sigma s_{\sigma(q)}^{-qH} {\mathbf 1}_A(s_{\sigma(1)}, \dots,s_{\sigma(q)}){\mathbf 1}_B(i_{\sigma(1)},\dots,i_{\sigma(q)}), \eeq
where $\mathbf \sigma$ covers all permutations of $\{ 1, \dots, q\}$.  This gives equivalent results, by the relation $I_q(\tilde f) = I_q(f)$ (see \cite{Nualart}, Sec. 1.1.2).  

By definition $\tilde{f}_{k^t}$ is nonzero only if $1 \le s_{\sigma(1)} < \dots < s_{\sigma(q)} \le k^t$ and $(i_{\sigma(1)}, \dots, i_{\sigma(q)}) = (1,\dots,q)$, hence it is possible to express $\tilde{f}_{k^t}$ without a sum.  Let $\sigma$ be an arbitrary permutation of $\{ 1, \dots, q\}$, and let $A_\sigma = \{ 1 \le s_{\sigma(1)}, < \dots< s_{\sigma(q)} \le k^t\}$.  Since the sets $\{ A_\sigma\}$ form an almost-everywhere partition of $[1, k^t]^q$, we can write \req{consum} as 
\beq{sansum} \tilde{f}_{k^t} = \frac{1}{q!} s_{(q)}^{-qH}{\mathbf 1}_{A_1}\left( (s_1, i_1),\dots,(s_q,i_q)\right),\eeq
where $s_{(q)} = \max\{s_1, \dots, s_q\}$, and the set $A_1$ is defined by the following condition: when $s_1,\dots, s_q$ are arranged in $[1,k^t]$ such that $s_{(1)}< \dots < s_{(q)}$, then $(i_{(1)}, \dots, i_{(q)}) = (1, \dots, q)$.

\medskip
In the next three results, we check the conditions of Theorem 2.3 for $\delta^q( \tilde{f}_{k^t})$. 

\begin{lemma}  For each $q\ge 2$ and $t >0$,
\[t\sigma_q^2 = \lim_{k\to\infty} {\mathbb E}\left[ X_k(t)^2\right]\]
exists, where $\sigma_q^2$ is given by \req{sigma_2} and \req{sigma_q} for $q=2$ and $q>2$, respectively.\end{lemma}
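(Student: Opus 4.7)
The plan is to reduce $\mathbb{E}[X_k(t)^2] = \mathbb{E}[Y_{k^t}^2]/\log k$ to a deterministic multiple integral and extract the logarithmic growth via a scaling argument. By Lemma 3.1, $Y_{k^t} = \delta^q(f_{k^t}) = I_q(\tilde f_{k^t})$. The permuted copies $f_{k^t,\sigma}$ in the symmetrization \req{consum} have pairwise disjoint supports, since the index configuration $i_j = \sigma^{-1}(j)$ uniquely determines $\sigma$; hence the cross terms in $\|\tilde f_{k^t}\|^2$ vanish, $\|\tilde f_{k^t}\|^2 = \|f_{k^t}\|^2/q!$, and \req{I_q_norm} together with the inner-product representation \req{contract} on $\hten^{\otimes q}$ yields
\[\mathbb{E}[Y_{k^t}^2] \;=\; \alpha_H^q\int_A\int_A s_q^{-qH}r_q^{-qH}\prod_{j=1}^q|s_j-r_j|^{2H-2}\,ds\,dr,\]
with $A=\{1\le s_1<\dots<s_q\le k^t\}$.

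The next step is to integrate out the innermost pair $s_1,r_1$, whose only occurrence is through $|s_1-r_1|^{2H-2}$. After a unit translation, \req{Rxy} gives
\[\alpha_H\int_1^{s_2}\int_1^{r_2}|s_1-r_1|^{2H-2}\,ds_1\,dr_1 \;=\; R(s_2-1,r_2-1),\]
producing the $R$-factor in the target formulas and reducing the prefactor from $\alpha_H^q$ to $\alpha_H^{q-1}$. I would then invoke the $s\leftrightarrow r$ symmetry to restrict to $s_q\le r_q$ and rescale $s_j=r_q x_j$, $r_j=r_q y_j$ for $j<q$, with $y_q=1$. The homogeneity $R(\lambda s,\lambda t)=\lambda^{2H}R(s,t)$, the scaling $|s_j-r_j|^{2H-2}=r_q^{2H-2}|x_j-y_j|^{2H-2}$, and the Jacobians conspire, after a careful exponent count, so that all powers of $r_q$ cancel save for a factor $r_q^{-1}$. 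Since $\int_1^{k^t}r_q^{-1}\,dr_q = t\log k$, this supplies the asserted rate. As $r_q\to\infty$ with $k^t/r_q\to\infty$, the rescaled domain approaches $\mathcal{M}$ and $R(s_2-1,r_2-1)\to R(s_2,r_2)$, so the rescaled inner integral converges to the expression defining $\sigma_q^2$ in \req{sigma_q}; for $q=2$ this inner integral is one-dimensional and reproduces \req{sigma_2} with $R(1,x)$ in place of $R(x_2,y_2)$.

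The main obstacle is to justify this passage to the limit by a dominated convergence argument uniform in $r_q\in[1,k^t]$. This requires producing an integrable dominator for the rescaled integrand as well as integrability of the limit integrand over $\mathcal{M}$. The singularities to check are: $(1-x_q)^{2H-2}$ as $x_q\to1$ and the diagonal kernels $|x_j-y_j|^{2H-2}$, both integrable since $2H-2>-1$; the weight $x_q^{-qH}$ near $x_q=0$, tamed because the ordering $x_2<\dots<x_q$ forces the subsequent integrations in $x_2,\dots,x_{q-1}$ to vanish at the required rate; and $R(x_2,y_2)$ near the origin, controlled by the bound $R(x_2,y_2)\le c_1(x_2y_2)^H$ from (R.2). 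The boundary regimes $r_q\sim 1$ and $r_q\sim k^t$, where the rescaled domain is strictly smaller than $\mathcal{M}$, should contribute only $O(1)$ to the outer logarithmic integral and hence vanish after dividing by $\log k$.
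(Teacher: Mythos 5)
Your reduction to the deterministic integral \req{E_X_k_1} is the same as the paper's: the vanishing of the cross terms in the symmetrization via the index pattern $i_j=\sigma^{-1}(j)$ is exactly the paper's diagonal-set ($\sigma=\sigma'$) argument, and the first integration over $s_1,r_1$ producing an $R$-factor is also the paper's step (you are in fact more careful, noting it is $R(s_2-1,r_2-1)$ rather than $R(s_2,r_2)$). Where you genuinely differ is in extracting the logarithm: you rescale inside the integral by the largest variable $r_q$, obtain the exact factor $r_q^{-1}$, and appeal to dominated convergence, whereas the paper applies L'H\^opital in $k$ and then rescales by $k^t$. Your variant is legitimate, and the domination you flag as ``the main obstacle'' is cheap: by (R.1), $R(r_qx-1,r_qy-1)\le r_q^{2H}R(x,y)$, so the rescaled inner integral $\Phi(r_q)$ is bounded uniformly in $r_q$ by the integral defining $\sigma_q^2$, whose finiteness is exactly the (R.2)-based check the paper performs; then $\frac{1}{\log k}\int_1^{k^t} r_q^{-1}\Phi(r_q)\,dr_q \to t\lim_{r\to\infty}\Phi(r)$ concludes. (Also, you write the rescaling only ``for $j<q$''; you of course need $s_q=r_qx_q$ as well, as your later discussion assumes.)

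The one concrete problem is a factor of two in your bookkeeping. Restricting to $\{s_q\le r_q\}$ by symmetry halves the integral, since the integrand is positive and symmetric under swapping the $r$- and $s$-blocks; the full quantity is therefore twice what your scaling computation produces, and as written the claimed limit $t\sigma_q^2$ is reached only because this factor is silently dropped. Carried out exactly as described, your computation gives $\lim_{k}\mathbb{E}[X_k(t)^2]=2t\,\alpha_H^{q-1}\int_0^1 x_q^{-qH}(1-x_q)^{2H-2}\int_{\cal M}R(x_2,y_2)\prod_{i=2}^{q-1}|x_i-y_i|^{2H-2}$, i.e.\ twice the right-hand side of \req{sigma_q} (and twice \req{sigma_2} for $q=2$, which one can confirm independently from $\mathbb{E}[Y_{k^t}^2]=\alpha_H\int_{[1,k^t]^2}(st)^{-2H}R(s-1,t-1)|s-t|^{2H-2}\,ds\,dt$). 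You must take an explicit stand on this factor rather than let it cancel against the target. Be aware that the same factor is at stake in the paper's own argument: in the L'H\^opital step, differentiating the double integral over $(r_q,s_q)\in[1,k^t]^2$ produces two equal boundary contributions, from $s_q=k^t$ and from $r_q=k^t$, of which only one is recorded in the displayed limit. So either the limiting variance is $2t\sigma_q^2$ with $\sigma_q^2$ as in \req{sigma_2}--\req{sigma_q}, or the constant $\sigma_q^2$ should carry the extra factor of $2$; your proof needs to resolve this discrepancy explicitly instead of inheriting it.
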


\begin{proof}
Since $f_{k^t}$ is deterministic, we use \req{I_q_norm} and \req{scalar_prod}:
\[{\mathbb E}\left[ X_k(t)^2\right] = \frac{1}{\log k}{\mathbb E}\left[ \delta^q(f_{k^t})^2\right] = \frac{q!}{\log k}\left< \tilde{f}_{k^t}, \tilde{f}_{k^t} \right>_{\hten^{\otimes q}}\]
\beq{EX_k_1_a}=\frac{\alpha_H^q}{q!\log k} \int_{[1,k^t]^{2q}} (r_{(q)} s_{(q)})^{-qH}{\mathbf 1}_{A_1}({\mathbf r},{\mathbf i}){\mathbf 1}_{A_1}({\mathbf s},{\mathbf j}) \prod_{\ell=1}^q |r_{\ell}-s_{\ell}|^{2H-2}~d{\mathbf r}~d{\mathbf s},\eeq
where $({\mathbf r},{\mathbf i}) = \left((r_1,i_i), \dots ,(r_q, i_q)\right)$, and similar for $({\mathbf s},{\mathbf j})$.  
To evaluate \req{EX_k_1_a}, we decompose $[1,k^t]^{2q}$ into the union of the sets $\{ A_\sigma \times A_{\sigma'}\}$, which form a partition almost everywhere.  Since ${\mathbf 1}_{A_1}({\mathbf r},{\mathbf i})$ is nonzero only if $r_{\sigma(1)}<\dots<r_{\sigma(q)}$ and $(i_{\sigma(1)},\dots,i_{\sigma(q)}) = (1,\dots, q)$, and similar for ${\mathbf 1}_{A_1}({\mathbf s},{\mathbf j})$, it follows that we integrate only over the diagonal sets, that is, when $\sigma = \sigma'$.  Hence, \req{EX_k_1_a} can be integrated as a sum of $q!$ equal terms, and we have
\beq{E_X_k_1}
{\mathbb E}\left[ X_k(t)^2\right]
 = \frac{\alpha_H^q}{\log k} \int_{\cal A} (r_q s_q)^{-qH} \prod_{i=1}^q |r_i-s_i|^{2H-2}~dr_1~ds_1 \dots dr_q~ds_q,\eeq
where the integral is over the set
\[{\cal A} = \left\{1\le r_1< \dots< r_q\le k^t,~1\le s_1< \dots< s_q\le k^t\right\}.\]
Integrating over $r_1, s_1$, we have by L'H\^opital,
\begin{multline*}
\lim_{k\to\infty} \frac{\alpha_H^{q-1}}{ \log k} \int_{[1,k^t]^2}(r_q s_q)^{-qH} |r_q-s_q|^{2H-2} \int_{\cal A'} R(r_2, s_2)\prod_{i=2}^{q-1} |r_i-s_i|^{2H-2}~dr_2~ds_2 \dots dr_q~ds_q\\
= \lim_{k\to\infty} t{k^t\alpha_H^{q-1}} \int_1^{k^t} (r_qk^t)^{-qH} (k^t-r_q)^{2H-2}\int_{\cal A'} R(r_2, s_2)\prod_{i=2}^{q-1} |r_i-s_i|^{2H-2}~dr_2~ds_2 \dots ds_{q-1}~dr_q,
\end{multline*}
where the set ${\cal A'} = \{ 1\le r_2< \dots< r_q,~1\le s_2< \dots< s_{q-1}\le k^t\}$ ($\cal A'$ is empty if $q=2$).  Using the change of variable $r_i = k^tx_i, \; s_i = k^ty_i$, this may be written
\beq{lim_sigma} \lim_{k\to\infty} t\alpha_H^{q-1} \int_{\frac{1}{k^t}}^1 x_q^{-qH} (1-x_q)^{2H-2}\int_{\cal M} R(x_2, y_2)\prod_{i=2}^{q-1} |x_i-y_i|^{2H-2}~dx_2~dy_2 \dots dy_{q-1}~dx_q\eeq
\[ = t\alpha_H^{q-1} \int_0^1 x_q^{-qH} (1-x_q)^{2H-2}\int_{\cal M} R(x_2, y_2)\prod_{i=2}^{q-1} |x_i-y_i|^{2H-2}~dx_2~dy_2 \dots dy_{q-1}~dx_q,\]
where ${\cal M}$ is as in \req{sigma_q} for $q>2$, and we have \req{sigma_2} if $q=2$.  To show \req{sigma_2} and \req{sigma_q} are convergent, we use properties (R.1) and (R.2), so that
\[\sigma_2^2 = \alpha_H\int_0^1 x^{-2H}(1-x)^{2H-2}R(1,x)~dx \le c_1\alpha_H\int_0^1 x^{-H}(1-x)^{2H-2}~dx < \infty\]
and for $q > 2$
\[\sigma_q^2 \le \alpha_H\int_0^1 x_q^{-qH}(1-x_q)^{2H-2}R(1,x_q)^{q-1}dx_q \le c_1^{q-1}\alpha_H\int_0^1 x_q^{-H}(1-x_q)^{2H-2}~dx_q < \infty.\]
This concludes the proof.
\end{proof}

\begin{lemma}
Let $0 \le \tau \le t$.  For each $q \ge 2$,
\[\lim_{k\to\infty} {\mathbb E}\left[ X_k(t) X_k(\tau)\right] = \sigma_q^2 \tau;\]
and consequently $\lim_{k\to\infty}{\mathbb E}\left[ X(s)X(t)\right] = \sigma_q^2(s\wedge t)$ for all $0\le s,t < \infty$.
\end{lemma}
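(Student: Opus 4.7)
The natural approach is to reduce to Lemma 3.3. By symmetry of the covariance, I may assume $\tau\le t$, and decompose
\[
{\mathbb E}\!\left[X_k(t)X_k(\tau)\right] = {\mathbb E}\!\left[X_k(\tau)^2\right]+\frac{1}{\log k}\,{\mathbb E}\!\left[Y_{k^\tau}\bigl(Y_{k^t}-Y_{k^\tau}\bigr)\right].
\]
The first term converges to $\sigma_q^2\tau$ by Lemma 3.3, so the whole problem reduces to showing that the cross term tends to zero.

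To handle the cross term, I write $Y_{k^\tau} = \delta^q(f_{k^\tau})$ and $Y_{k^t}-Y_{k^\tau} = \delta^q(f_{k^t}-f_{k^\tau})$, so its expectation equals $q!\langle \tilde f_{k^\tau},\tilde f_{k^t}-\tilde f_{k^\tau}\rangle_{\hten^{\otimes q}}$ by \req{I_q_norm}. Both symmetrized kernels are supported on the set where the times taken in increasing order carry indices $(1,\dots,q)$, as in \req{sansum}. Running the same partition-of-the-diagonal argument as in Lemma 3.3, only the matching-permutation pairings survive, and I arrive at
\[
{\mathbb E}\!\left[Y_{k^\tau}(Y_{k^t}-Y_{k^\tau})\right] = \alpha_H^q\int_{{\cal B}}(r_qs_q)^{-qH}\prod_{i=1}^q|r_i-s_i|^{2H-2}\,d{\mathbf r}\,d{\mathbf s},
\]
where ${\cal B}=\{1\le r_1<\cdots<r_q\le k^\tau,\,1\le s_1<\cdots<s_q,\,s_q\in(k^\tau,k^t]\}$.

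The main obstacle is to show this integral stays $O(1)$ as $k\to\infty$, so that after division by $\log k$ it vanishes. I plan the substitution $r_i=k^\tau x_i$, $s_i=k^\tau y_i$; exactly as in Lemma 3.3 the powers of $k^\tau$ cancel ($-2qH+q(2H-2)+2q=0$), and the transformed region
\[
{\cal B}'=\{k^{-\tau}\le x_1<\cdots<x_q\le 1,\,k^{-\tau}\le y_1<\cdots<y_q,\,y_q\in(1,k^{t-\tau}]\}
\]
increases monotonically to ${\cal B}'_\infty=\{0<x_1<\cdots<x_q\le 1,\,0<y_1<\cdots<y_q,\,y_q>1\}$. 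Since the integrand is nonnegative, monotone convergence reduces the task to proving that the integral over ${\cal B}'_\infty$ is finite. This I plan to do by integrating the innermost pair $(x_1,y_1)$ through \req{Rxy} to produce the factor $\alpha_H^{-1}R(x_2,y_2)$, using (R.2) to obtain $R(x_2,y_2)\le c_1(x_2y_2)^H$, and iterating on subsequent pairs; the remaining outer integrals in $x_q\in(0,1]$ and $y_q\in(1,\infty)$ converge thanks to $qH>1$ and $2H-2>-1$, essentially the same finiteness check that concludes Lemma 3.3.

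The final claim $\lim_{k\to\infty}{\mathbb E}[X_k(s)X_k(t)]=\sigma_q^2(s\wedge t)$ for arbitrary $s,t\ge 0$ follows from the first part by applying it with $\tau=s\wedge t$ and $s\vee t$ in place of $t$.
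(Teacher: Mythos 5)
Your proposal is correct and follows essentially the same route as the paper's proof: split ${\mathbb E}[X_k(t)X_k(\tau)]$ into ${\mathbb E}[X_k(\tau)^2]$ plus a cross term, write the cross term as $q!\langle \tilde f_{k^\tau},\tilde f_{k^t}-\tilde f_{k^\tau}\rangle_{\hten^{\otimes q}}$, reduce to a single ordering by the same diagonal argument, and show the resulting scale-invariant integral is bounded uniformly in $k$ (the paper bounds the inner pairs via \req{R_est} before changing variables in the outer pair only, while you rescale all coordinates first — a cosmetic difference). One small correction: the convergence of your outer integral over $x_q\in(0,1]$, $y_q\in(1,\infty)$ with integrand comparable to $(x_qy_q)^{-H}(y_q-x_q)^{2H-2}$ rests on $H<1$ (integrability at $x_q\to 0$ and decay $y_q^{H-2}$ at infinity) and $2H-2>-1$ (integrable singularity near $x_q=y_q=1$), not on $qH>1$; the finiteness claim itself is valid.
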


\begin{proof}
\begin{align*}
{\mathbb E}\left[X_k(t) X_k(\tau)\right] &={\mathbb E}\left[ (X_k(t) - X_k(\tau)+X_k(\tau))X_k(\tau)\right]\\
&=\frac{1}{\log k}{\mathbb E}\left[ (Y_{k^t} - Y_{k^\tau})Y_{k^\tau}\right] + {\mathbb E}\left[X_k(\tau)^2\right],
\end{align*}
where ${\mathbb E}\left[ X_k(\tau)^2\right] \to \sigma_q^2\tau$ by Lemma 3.3.  Note that
$Y_{k^t} - Y_{k^\tau} = \delta^q(\tilde{f}_{k^t}) - \delta^q(\tilde{f}_{k^\tau}),$ where, recalling the notation of \req{sansum},
\begin{multline*}\delta^q(\tilde{f}_{k^t}) - \delta^q(\tilde{f}_{k^\tau}) 
=\int_{[1,k^t]^{2q}} \frac{1}{q! s_{(q)}^{qH}} {\mathbf 1}_{A_1}({\mathbf s},{\mathbf i})~\delta B_s - \int_{[1,k^\tau]^{2q}} \frac{1}{q! s_{(q)}^{qH}} {\mathbf 1}_{A_1}({\mathbf s},{\mathbf i})~\delta B_s\\
=\int_1^{k^{t}}\int_1^{s_{(q)}}\cdots\int_1^{s_{(2)}} \frac{1}{q! s_{(q)}^{qH}} \delta B^{(1)}_{s_{(1)}}\cdots\delta B_{s_{(q-1)}}^{(q-1)}~\delta B_{s_{(q)}}^{(q)} -\int_1^{k^\tau}\int_1^{s_{(q)}}\cdots\int_1^{s_{(2)}} \frac{1}{q! s_{(q)}^{qH}} \delta B^{(1)}_{s_{(1)}}\cdots\delta B_{s_{(q-1)}}^{(q-1)}~\delta B_{s_{(q)}}^{(q)}\\=
\int_{k^\tau}^{k^{t}}\int_1^{s_{(q)}}\cdots\int_1^{s_{(2)}} \frac{1}{q! s_{(q)}^{qH}} \delta B^{(1)}_{s_{(1)}}\cdots\delta B_{s_{(q-1)}}^{(q-1)}~\delta B_{s_{(q)}}^{(q)}.
\end{multline*}
Hence, we can write $Y_{k^t} - Y_{k^\tau} = \delta^q(\tilde{f}_{\Delta k})$, where
\beq{f_Delta}
\tilde{f}_{\Delta k} = \frac{1}{q! s_{(q)}^{qH}}{\mathbf 1}_{A_1} {\mathbf 1}_{\{k^\tau \le s_{(q)} \le k^t\}}
= \tilde{f}_{k^t} {\mathbf 1}_{\{k^\tau \le s_{(q)} \le k^t\}}.\eeq
With this notation, it follows that
\begin{multline*}
\frac{1}{\log k} {\mathbb E}\left[(Y_{k^t} - Y_{k^\tau})Y_{k^\tau}\right] = \frac{q!}{\log k}\left< \tilde{f}_{\Delta k}, \tilde{f}_{k^\tau}\right>_{\hten^{\otimes q}} \\
= \frac{\alpha_H^q}{q!\log k}\int_{[1,k^t]^{2q}} \left(r_{(q)}s_{(q)}\right)^{-qH}{\mathbf 1}_{A_1}({\mathbf r},{\mathbf i}){\mathbf 1}_{A_1}({\mathbf s},{\mathbf j}){\mathbf 1}_{\{1\le s_{(q)} \le k^\tau \le r_{(q)} \le k^t\}}\prod_{\ell =1}^q |r_\ell -s_\ell |^{2H-2} d{\mathbf s}~d{\mathbf r}.\end{multline*}
As in Lemma 3.3, we decompose $[1,k^t]^{2q}$ into the union of the sets $\{ A_\sigma \times A_{\sigma'}\}$.  Since ${\mathbf 1}_{A_1}({\mathbf r},{\mathbf i})$ is nonzero only if $r_{\sigma(1)}<\dots<r_{\sigma(q)}$ and $(i_{\sigma(1)},\dots,i_{\sigma(q)}) = (1,\dots, q)$, and similar for ${\mathbf 1}_{A_1}({\mathbf s},{\mathbf j})$, it follows that we integrate only over the diagonal sets, that is, when $\sigma = \sigma'$.  Hence, we have $q!$ equal terms of the form
\beq{Cov_diff}
\frac{\alpha_H^q}{q!\log k} \int_{k^\tau}^{k^t}\int_1^{k^\tau}\int_{[1,k^t]^{2q-2}} \left(r_qs_q\right)^{-qH} {\mathbf 1}_{\{r_1<\dots< r_q\}}{\mathbf 1}_{\{s_1<\dots< s_q\}}\prod_{\ell =1}^q |r_\ell -s_\ell |^{2H-2} d{\mathbf s}~d{\mathbf r}.\eeq
By (R.1) and (R.2), for each $r_\ell \le r_q, ~ s_\ell \le s_q$, we have the estimate
\[
\alpha_H \int_1^{r_{(\ell)}} \int_1^{s_{(\ell)}} |r_{(\ell-1)} - s_{(\ell-1)}|^{2H-2} dr_{(\ell-1)}~ds_{(\ell-1)} \]
\beq{R_est}\qquad\qquad\le \alpha_H \int_0^{r_{(q)}}\int_0^{s_{(q)}} |r-s|^{2H-2}dr~ds = R(r_{(q)}, s_{(q)}) \le c_1 (r_qs_q)^H.\eeq
It follows that
\begin{align*}
\frac{1}{\log k} {\mathbb E}\left[(Y_{k^t} - Y_{k^\tau})Y_{k^\tau}\right]  &\le \frac{C}{\log k} \int_{k^\tau}^{k^t}\int_1^{k^\tau} (r_qs_q)^{-qH}R(r_q,s_q)^{q-1}|r_q-s_q|^{2H-2}dr_q~ds_q\\
&\le \frac{C}{\log k} \int_{k^\tau}^{k^t}\int_1^{k^\tau} (r_qs_q)^{-2H}R(r_q,s_q)|r_q-s_q|^{2H-2}dr_q~ds_q.
\end{align*}
Using the change-of-variable $s_q = k^\tau x,~r_q = k^\tau y$, this is bounded by
\[
\frac{C}{\log k} \int_1^{k^{t-\tau}}\int_0^1 (xy)^{-2H} R(x,y) (y-x)^{2H-2}dx~dy.\]
Using (R.3), we obtain the estimate,
\[\frac{C}{\log k} \int_1^{k^{t-\tau}}\int_0^1  \left(y^{-2H}(y-x)^{2H-2} + x^{1-2H}y^{-1}(y-x)^{2H-2}\right)~dx~dy,\]
where
\begin{align*}
\int_1^{k^{t-\tau}}\int_0^1 y^{-2H}(y-x)^{2H-2}dx~dy &\le \int_1^2 y^{-2H}\int_0^y (y-x)^{2H-2}dx~dy + \int_2^{k^{t-\tau}} (y-1)^{-2}dy\\
&\le C\int_1^2 y^{-1}dy + C\int_1^\infty y^{-2}dy < \infty,\end{align*}
and
\begin{align*}
\int_1^{k^{t-\tau}}\int_0^1 y^{-1}x^{1-2H}(y-x)^{2H-2}dx~dy &\le \int_1^2 y^{-1}\int_0^y x^{1-2H}(y-x)^{2H-2}dx~dy + \int_2^{k^{t-\tau}} (y-1)^{2H-3}dy\\
&\le C\int_1^2 y^{-1}dy + \int_1^\infty y^{2H-3}dy < \infty.\end{align*}
Hence, this term vanishes and Lemma 3.4 is proved.
\end{proof}
 
\medskip
\subsection{Conditions for weak convergence of $\{ X_k(t)\}$}
In the next two lemmas we verify additional properties of $\{ X_k(t)\}$.  In Lemma 3.5 we check condition (iv) of Theorem 2.3, and Lemma 3.6 is a tightness result.

\begin{lemma}  Fix $q \ge 2$ and $t>0$.  For each integer $1\le p \le q-1$,
\[ \lim_{k\to\infty} \;(\log k)^{-2} \| \tilde{f}_{k^t} \otimes_p \tilde{f}_{k^t} \|_{\hten^{\otimes 2(q-p)}}^2 = 0.\]\end{lemma}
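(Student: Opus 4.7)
The plan is to express $\|\tilde{f}_{k^t} \otimes_p \tilde{f}_{k^t}\|_{\hten^{\otimes 2(q-p)}}^2$ as an explicit multiple integral via \req{sansum} and \req{contract_int}, bound the integrand via the covariance estimates (R.1)--(R.3) in the same spirit as the proof of Lemma 3.3, and finally apply a scaling change of variables $r = k^t x$, $s = k^t y$ to isolate the dependence on $\log k$.

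Using the single-term representation \req{sansum}, each copy of $\tilde{f}_{k^t}$ is supported on a single rigid correspondence between the ordering of time variables and the assignment of component indices. Expanding $\tilde{f}_{k^t} \otimes_p \tilde{f}_{k^t}$ via \req{contract_int} thus yields a sum, indexed by which $p$ of the $q$ positions are contracted in each copy together with index-matching constraints, of integrals of the form
\[
C\,\alpha_H^p \int_{[1,k^t]^{2p}} r_{(q)}^{-qH} s_{(q)}^{-qH}\,\mathbf{1}_{A_1}(\mathbf{r},\mathbf{i})\,\mathbf{1}_{A_1}(\mathbf{s},\mathbf{j})\, \prod_{\ell \in J} |r_\ell - s_\ell|^{2H-2}\, d\mathbf{r}\, d\mathbf{s}.
\]
Squaring in $\hten^{\otimes 2(q-p)}$ produces a $4q$-dimensional integral containing two copies of this structure linked by $2(q-p)$ additional kernel factors $|u-v|^{2H-2}$ and four factors of the form $s_q^{-qH}$.

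The key reduction is to iterate the estimate \req{R_est}: a double integral $\alpha_H\int\int |r-s|^{2H-2}\,dr\,ds$ over a lower-order pair of coordinates is controlled by an $R(\cdot,\cdot)$ factor, which by (R.2) is bounded by $c_1(\cdot\,\cdot)^H$. Applied to the $q-1$ ``lower'' coordinates in each of the four copies, this reduces the problem to a four-dimensional integral in the ``top'' time variables $r_q, s_q, r'_q, s'_q$, whose singularities are controlled by the residual factors $(r_q s_q r_q' s_q')^{-qH + (q-1)H}$ together with two remaining kernels $|r_q-s_q|^{2H-2}$ and $|r_q'-s_q'|^{2H-2}$. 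The scaling change of variables $r_q = k^t x$, $s_q=k^t y$, etc., as in Lemma 3.3, then expresses the residual integral as at most $(\log k)$ times a bounded integral over $[k^{-t},1]^4$.

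The main obstacle is the combinatorial bookkeeping of which permutations give nonzero contributions and the careful verification that, after reduction, exactly one rather than two factors of $\log k$ survive. The assumption $1\le p \le q-1$ is essential here: a genuine partial contraction forces a coincidence between the two copies at the contraction ``cut'', breaking one degree of freedom relative to the full outer product $\tilde{f}_{k^t}\otimes \tilde{f}_{k^t}$ and thereby costing one factor of $\log k$. Combined with the $(\log k)^{-2}$ prefactor this gives $o(1)$. The detailed calculation parallels the estimates in Lemma 3.4 and I would anticipate constitutes the technical lemma deferred to Section 4.
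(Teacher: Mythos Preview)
Your overall strategy---expand via \req{sansum} and \req{contract_int}, reduce the $4q$-fold integral to a four-dimensional one over the top variables $r_q,s_q,r_q',s_q'$, and then scale---matches the paper. The gap is in the reduction step. You propose to bound \emph{all} of the $R(\cdot,\cdot)$ factors arising from \req{R_est} by (R.2), landing on
\[
\int_{[1,k^t]^4} (r_q s_q r_q' s_q')^{-H}\,|r_q-s_q|^{2H-2}\,|r_q'-s_q'|^{2H-2}\,d r_q\,d s_q\,d r_q'\,d s_q'.
\]
But this integral factorises as $\bigl(\int_{[1,k^t]^2}(rs)^{-H}|r-s|^{2H-2}\,dr\,ds\bigr)^2$, and after the scaling $r=k^t x$, $s=k^t y$ one finds
\[
\int_{[k^{-t},1]^2}(xy)^{-H}|x-y|^{2H-2}\,dx\,dy \;=\; 2\int_{k^{-t}}^1 y^{-1}\int_0^1 u^{-H}(1-u)^{2H-2}\,du\,dy \;\sim\; C\log k,
\]
so your bound is of order $(\log k)^2$, not $\log k$. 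Divided by $(\log k)^2$ this gives only $O(1)$, not $o(1)$. Your heuristic that the contraction ``breaks one degree of freedom'' is correct in spirit, but it is not captured by the computation you actually outline: once you apply (R.2) to every $R$ factor, the linkage between the two copies of the contraction is erased and the integral decouples.

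The paper's fix is to stop one step short: after using \req{R_est} on the lower coordinates, bound only $q{-}2$ of the resulting $R$-powers by (R.2), retaining one factor $R(r_q,r_q')\,R(s_q,s_q')$ (or $R(r_q,s_q)\,R(r_q',s_q')$, depending on whether the top index lies in $\{1,\dots,p\}$ or in $\{p{+}1,\dots,q\}$---both cases occur and must be treated). This yields the four-dimensional integral
\[
\int_{[1,k^t]^4} (r s r' s')^{-2H}\,R(r,r')\,R(s,s')\,|r-s|^{2H-2}\,|r'-s'|^{2H-2}\,dr\,ds\,dr'\,ds',
\]
which does \emph{not} factorise. Showing that this is $O(\log k)$ is exactly Lemma~4.1, whose proof relies on the sharper estimate (R.3), $R(s,t)\le s^{2H}+st^{2H-1}$ for $s\le t$, rather than (R.2). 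The hypothesis $1\le p\le q-1$ enters precisely here: it guarantees that at least one such cross $R$-factor is available to retain in each of the two cases.
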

\begin{proof}
Let $1\le p\le q-1$.  To compute the $p^{th}$ contraction of $\tilde{f}_{k^t}$, we use \req{contract_int}.
\beq{contr_1}
\tilde{f}_{k^t} \otimes_p \tilde{f}_{k^t} = \frac{\alpha_H^p}{(q!)^2} \int_{[1,k^t]^{2p}} (r_{(q)}s_{(q)})^{-qH} {\mathbf 1}_{A_1}({\mathbf r},{\mathbf i}){\mathbf 1}_{A_1}({\mathbf s},{\mathbf j}) \prod_{\ell =1}^p |r_\ell - s_\ell|^{2H-2}dr_1~ds_1 \dots dr_p~ds_p.\eeq
Using \req{contr_1}, we want to compute
\[\| \tilde{f}_{k^t} \otimes_p \tilde{f}_{k^t} \|_{\hten^{\otimes 2(q-p)}}^2 = \left< \tilde{f}_{k^t} \otimes_p \tilde{f}_{k^t}, \tilde{f}_{k^t} \otimes_p \tilde{f}_{k^t}\right>_{\hten^{\otimes 2(p-q)}}\]
\begin{multline}
= \frac{\alpha_H^{2q}}{(q!)^4}\int_{[1,k^t]^{4q}} \left(r_{(q)}s_{(q)}r_{(q)}'s_{(q)}'\right)^{-qH}
\left( {\mathbf 1}_{A_1}\right)^4 \prod_{i=1}^p \left(|r_i-s_i||r_i'-s_i'|\right)^{2H-2}\\
\times \prod_{i=p+1}^q \left(|r_i-r_i'||s_i-s_i'|\right)^{2H-2}~d{\mathbf r}~d{\mathbf s}~d{\mathbf r'}d{\mathbf s'}.\end{multline}
As in the proof of Lemma 3.3, we view integration over the set $[1,k^t]^{4q}$ as a sum of integrals over various cases corresponding to the orderings of the real variables $r_1, \dots, r_q$ (as in Lemma 3.3, the variables ${\mathbf s},{\mathbf r'},{\mathbf s'}$ must follow the same ordering).  Up to permutation of indices, each integral term has the form
\beq{contr_s}\frac{\alpha_H^{2q}}{(q!)^4}\int_{\cal G} (r_{(q)} s_{(q)} r_{(q)}' s_{(q)}')^{-qH} \prod_{i=1}^p \left(|r_i - s_i|~|r_i'-s_i'|\right)^{2H-2}
 \prod_{i=p+1}^q \left(|r_i - r_i'|~|s_i-s_i'|\right)^{2H-2} d{\mathbf r}~d{\mathbf s}~d{\mathbf r'}~d{\mathbf s'},\eeq
where ${\cal G} = \left\{ 1\le r_{(1)}< \dots < r_{(q)}\le k^t; \dots; 1\le s_{(1)}'< \dots < s_{(q)}'\le k^t\right\}$. 
To evaluate \req{contr_s}, there are two cases to consider.  The first case is if $r_{(q)} \in \{r_1, \dots, r_p\}$, that is, \req{contr_s} contains the terms $|r_{(q)}-s_{(q)}|, |r_{(q)}' - s_{(q)}'|$.  In this case, using \req{R_est} we can bound \req{contr_s} by
\begin{align*}\frac{\alpha_H^2}{(q!)^4}\int_{[1,{k^t}]^4}(r_{(q)} s_{(q)} r_{(q)}' s_{(q)}')^{-qH}& \left(R(r_{(q)},s_{(q)})R(r_{(q)}',s_{(q)}')\right)^{p-1} \left(R(r_{(q)},r_{(q)}')R(s_{(q)},s_{(q)}')\right)^{q-p}\\
&\qquad\;\; \times\left(|r_{(q)} - s_{(q)}|~|r_{(q)}'-s_{(q)}'|\right)^{2H-2}~dr_{(q)}~ds_{(q)}~dr_{(q)}'~ds_{(q)}'\end{align*}
\beq{contr_red}\le C\int_{[1,{k^t}]^4}(r s r' s')^{-2H} R(r,r')~R(s,s') \left(|r - s|~|r'-s'|\right)^{2H-2}dr~ds~dr'~ds',\eeq
where we used (R.2) in the last estimate.  The second case is the complement, that is, $r_{(q)} \in \{r_{p+1}, \dots, r_q\}$, so that \req{contr_s} contains the terms $|r_{(q)} - r_{(q)}'|,|s_{(q)}-s_{(q)}'|$.  If this is the case, then \req{contr_s} is bounded by
\begin{align*}\frac{\alpha_H^2}{(q!)^4}\int_{[1,{k^t}]^4}(r_{(q)} s_{(q)} r_{(q)}' s_{(q)}')^{-qH}& \left(R(r_{(q)},s_{(q)})R(r_{(q)}',s_{(q)}')\right)^{p} \left(R(r_{(q)},r_{(q)}')R(s_{(q)},s_{(q)}')\right)^{q-p-1}\\
&\qquad\;\; \times\left(|r_{(q)} - s_{(q)}|~|r_{(q)}'-s_{(q)}'|\right)^{2H-2}~dr_{(q)}~ds_{(q)}~dr_{(q)}'~ds_{(q)}'\end{align*}
\beq{contr_red_2}\le C\int_{[1,{k^t}]^4}(r s r' s')^{-2H} R(r,s)~R(r',s') \left(|r - r'|~|s-s'|\right)^{2H-2}dr~ds~dr'~ds'.\eeq
The result then follows by a change of variable and applying Lemma 4.1 to \req{contr_red} and \req{contr_red_2}.
\end{proof}

\medskip
\begin{lemma}There is a constant $0<C<\infty$ such that for each $k \ge 2$ and any $0 \le \tau < t < \infty$ we have
\[{\mathbb E}\left[ |X_k(t) - X_k(\tau)|^4\right] \le C(t-\tau)^2.\]\end{lemma}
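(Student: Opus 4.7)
The plan is to exploit the fact that $Y_{k^t}-Y_{k^\tau}$ lives in a fixed Wiener chaos (the $q$-th), so hypercontractivity reduces the fourth-moment bound to a second-moment bound. The key identity is the one already isolated at \req{f_Delta}: we have $Y_{k^t}-Y_{k^\tau}=\delta^q(\tilde f_{\Delta k})=I_q(\tilde f_{\Delta k})$ where $\tilde f_{\Delta k}=\tilde f_{k^t}\mathbf{1}_{\{k^\tau\le s_{(q)}\le k^t\}}$. Applying \req{hyper} with $p=4$ gives
\[
{\mathbb E}\bigl[|Y_{k^t}-Y_{k^\tau}|^4\bigr]\le C_{4,q}\bigl({\mathbb E}\bigl[(Y_{k^t}-Y_{k^\tau})^2\bigr]\bigr)^2,
\]
so after dividing by $(\log k)^2$ it suffices to prove ${\mathbb E}[(Y_{k^t}-Y_{k^\tau})^2]\le C(t-\tau)\log k$.

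To estimate the second moment, I would mimic the opening of the proof of Lemma 3.3. Using \req{I_q_norm}, \req{scalar_prod} and the partition argument (decomposing $[1,k^t]^{2q}$ into the sets $A_\sigma\times A_{\sigma'}$ and observing that only diagonal terms $\sigma=\sigma'$ contribute), one obtains
\[
{\mathbb E}\bigl[(Y_{k^t}-Y_{k^\tau})^2\bigr]=\alpha_H^q\int_{\mathcal A_\Delta}(r_qs_q)^{-qH}\prod_{i=1}^q|r_i-s_i|^{2H-2}\,dr\,ds,
\]
with $\mathcal A_\Delta=\{1\le r_1<\dots<r_q,\,1\le s_1<\dots<s_q,\,r_q,s_q\in[k^\tau,k^t]\}$. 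Iterating the estimate \req{R_est} on the inner $q-1$ pairs of variables and then using (R.2) to bound $R(r_q,s_q)^{q-1}\le c_1^{q-1}(r_qs_q)^{H(q-1)}$ collapses the expression to
\[
{\mathbb E}\bigl[(Y_{k^t}-Y_{k^\tau})^2\bigr]\le C\int_{k^\tau}^{k^t}\!\!\int_{k^\tau}^{k^t}(r_qs_q)^{-H}|r_q-s_q|^{2H-2}\,dr_q\,ds_q.
\]
The change of variables $r_q=k^\tau u$, $s_q=k^\tau v$ is scale-invariant for this integrand (the Jacobian $k^{2\tau}$ exactly cancels the factors $(k^\tau)^{-2H}$ and $(k^\tau)^{2H-2}$), leaving
\[
\int_1^{K}\!\int_1^{K}(uv)^{-H}|u-v|^{2H-2}\,du\,dv,\qquad K=k^{t-\tau}.
\]

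The remaining step, and the only real calculation, is to show this last integral is $O(\log K)=O((t-\tau)\log k)$. Splitting the inner integral over $u\le v/2$ and $u\ge v/2$ and using $H<1$, $2H-2>-1$, both pieces are bounded by a constant times $v^{H-1}$, so after multiplying by $v^{-H}$ we get an integrand bounded by $Cv^{-1}$, whose integral from $1$ to $K$ is $C\log K$. Combining everything yields
\[
{\mathbb E}\bigl[|X_k(t)-X_k(\tau)|^4\bigr]=\frac{{\mathbb E}[|Y_{k^t}-Y_{k^\tau}|^4]}{(\log k)^2}\le\frac{C\bigl((t-\tau)\log k\bigr)^2}{(\log k)^2}=C(t-\tau)^2.
\]
The main obstacle is only bookkeeping: making sure the reduction to the $q=1$ style integral via (R.2)/\req{R_est} really gives the clean scale-invariant kernel above, and then verifying that the resulting double integral indeed grows like $\log K$ rather than a positive power of $K$. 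Once that scaling check is done, hypercontractivity does all the rest for free.
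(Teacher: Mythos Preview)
Your proof is correct and follows essentially the same route as the paper: hypercontractivity reduces to the second moment, the diagonal decomposition and \req{R_est} collapse the $2q$-fold integral to a two-variable one, and a scaling argument shows that integral grows like $\log k^{t-\tau}$. The only cosmetic difference is that you use (R.2) to absorb \emph{all} powers of $R$ and land on the kernel $(uv)^{-H}|u-v|^{2H-2}$ over $[1,k^{t-\tau}]^2$, while the paper keeps one factor of $R$, rescales to $[k^{\tau-t},1]^2$, and then applies (R.3); both estimates yield the same $\int v^{-1}\,dv$ bound.
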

\begin{proof}
Based on the hypercontractivity property \req{hyper}, it is enough to show
\[{\mathbb E}\left[ |X_k(t) - X_k(\tau)|^2\right] \le C(t-\tau).\]
Using the notation of \req{f_Delta}, we can write
\begin{multline*} {\mathbb E}\left[ |X_k(t) - X_k(\tau)|^2\right] = \frac{1}{\log k}{\mathbb E}\left[ |Y_{k^t} - Y_{k^\tau}|^2\right]= \frac{q!}{\log k}\left< \tilde{f}_{\Delta k}, \tilde{f}_{\Delta k}\right>_{\hten^{\otimes q}}\\
=\frac{\alpha_H^q}{q!\log k}\int_{[1,k^t]^{2q}} \left(r_{(q)}s_{(q)}\right)^{-qH}{\mathbf 1}_{A_1}({\mathbf r},{\mathbf i}){\mathbf 1}_{A_1}({\mathbf s},{\mathbf j}){\mathbf 1}_{\{k^\tau \le r_{(q)},s_{(q)} \le k^t\}}~\prod_{\ell =1}^q |r_\ell -s_\ell |^{2H-2} d{\mathbf s}~d{\mathbf r}.\end{multline*}
In the same manner as \req{Cov_diff}, this can be decomposed into a sum of $q!$ equal terms of the form
\[\frac{\alpha_H^q}{q!\log k} \int_{k^\tau}^{k^t}\int_{k^\tau}^{k^t}\int_{[1,k^t]^{2q-2}} \left(r_qs_q\right)^{-qH} {\mathbf 1}_{\{r_1\le\dots\le r_q\}}{\mathbf 1}_{\{s_1\le\dots\le s_q\}}\prod_{\ell =1}^q |r_\ell -s_\ell |^{2H-2} d{\mathbf s}~d{\mathbf r}.\]
Similar to Lemma 3.5, we use \req{R_est} and a change-of-variable to obtain
\begin{align*}
\frac{1}{\log k}{\mathbb E}\left[ |Y_{k^t} - Y_{k^\tau}|^2\right] &\le \frac{C}{\log k}\int_{k^\tau}^{k^t}\int_{k^\tau}^{k^t} (r_qs_q)^{-qH}R(r_q, s_q)^{q-1}|r_q-s_q|^{2H-2}dr_q~ds_q\\
&\le \frac{C}{\log k} \int_{k^{\tau-t}}^1\int_{k^{\tau-t}}^1 (xy)^{-2H} R(x,y) |x-y|^{2H-2}dx~dy.\end{align*}
Without loss of generality, assume $x < y$.  By (R.3), we have the estimate
\begin{align*}
\frac{C}{\log k}\int_{k^{\tau-t}}^1\int_{k^{\tau-t}}^1 (xy)^{-2H}R(x,y)& |x-y|^{2H-2}dx~dy = \frac{C}{\log k}\int_{k^{\tau-t}}^1\int_{k^{\tau-t}}^y (xy)^{-2H}R(x,y) |x-y|^{2H-2}dx~dy\\
&\le \frac{C}{\log k}\int_{k^{\tau-t}}^1\int_0^y y^{-2H}(y-x)^{2H-2} + x^{1-2H}y^{-1}(y-x)^{2H-2} dx~dy\\
&\le \frac{C}{\log k} \int_{k^{\tau-t}}^1 y^{-2H}y^{2H-1} + y^{-1}~dy \le C(t-\tau).\end{align*}
This concludes the proof.
\end{proof}

\medskip
\subsection{Proof of Theorem 3.2}
Fix integers $q \ge 2$ and $d\ge 1$, and choose a set of times $0\le t_1 < \dots < t_d$.  Lemmas 3.3 and 3.4 show that the random vector sequence $\left\{\left(X_k(t_1),\dots,X_k(t_d)\right), k\ge 1\right\}$ meets the covariance conditions of Theorem 2.3.  Moreover, Lemma 3.5 verifies condition (iv) of Theorem 2.3.  Therefore, we conclude that as $k \to \infty$,
\beq{Vec_in_law}  \left(X_k(t_1),\dots,X_k(t_d)\right) \law \left(X(t_1),\dots, X(t_d)\right),\eeq
where each $X(t_i)$ has distribution ${\cal N}(0, \sigma_q^2 t_i)$, and ${\mathbb E}\left[ X(t_i) X(t_k)\right] = \sigma_q^2(t_i \wedge t_k)$ for all $1 \le i,k \le d$.  By Lemma 3.6, the sequence $\{X_k(t)\}$ is tight, hence it follows from \req{Vec_in_law} that the sequence converges in the sense of finite-dimensional distributions, and we conclude that the family $\{X_k(t), t\ge 0\}$ converges in distribution to the process $\{ X(t), t\ge 0\} \stackrel{\cal L}{=} \{\sigma_q W_t, t\ge 0\},$ where $W_t$ is a standard Brownian motion.  This concludes the proof of Theorem 3.2.

\medskip
\subsection{Rate of convergence}
Let $t >0$ be fixed.  By Theorem 3.2, it follows that the sequence $\{X_k(t), k\ge 1\}$ converges in distribution to a random variable $N(t)$, where $N(t) \sim {\cal N}(0,\sigma_q^2 t)$.  Recent work by Nourdin and Peccati \cite{NoP11} has produced a stronger form of the Fourth Moment Theorem for the 1-dimensional case, that is, that the conditions of the Fourth Moment Theorem also imply convergence in the sense of total variation (as well as other metrics - see Theorem 5.2.6).  The result below follows from Corollary 5.2.10 of \cite{NoP11}.

\begin{proposition}  Let $t \ge 0$.  Then for sufficiently large $k$, there is a constant $0 < C < \infty$ such that
\[ d_{TV} \left( X_k(t), N(t)\right) \le \frac{C}{\sqrt{\log k}},\]
where $d_{TV}(\cdot,\cdot)$ is total variation distance.  Hence $X_k(t)$ converges as $k\to\infty$ to Gaussian in the sense of total variation.
\end{proposition}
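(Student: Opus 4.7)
The plan is to apply the quantitative one-dimensional Fourth Moment bound in total variation (Corollary 5.2.10 of \cite{NoP11}) to the Wiener chaos representation $X_k(t) = I_q(g_k)$, where $g_k := \tilde f_{k^t}/\sqrt{\log k} \in \hten^{\odot q}$. That corollary supplies a constant $c_q > 0$ depending only on $q$ such that, for $N(t) \sim {\cal N}(0,\sigma_q^2 t)$,
\[
d_{TV}(X_k(t), N(t)) \le c_q\left(\bigl|{\mathbb E}[X_k(t)^2] - \sigma_q^2 t\bigr| + \max_{1 \le p \le q-1} \|g_k \otimes_p g_k\|_{\hten^{\otimes 2(q-p)}}\right).
\]
It therefore suffices to show that each of the two terms on the right is $O((\log k)^{-1/2})$, which will be the dominant rate.

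For the contraction term, I would revisit the proof of Lemma 3.5 together with the technical Lemma 4.1, now tracking constants rather than merely limits. The estimates \req{contr_red} and \req{contr_red_2} reduce $\|\tilde f_{k^t} \otimes_p \tilde f_{k^t}\|_{\hten^{\otimes 2(q-p)}}^2$ to integrals over $[1,k^t]^4$ that are handled by Lemma 4.1. A careful reading of that lemma reveals that these integrals are in fact $O(\log k)$ as $k\to\infty$, which is precisely what forces the normalization $(\log k)^{-2}$ in Lemma 3.5 to produce a zero limit rather than a strictly positive one. Dividing by $(\log k)^2$ therefore yields $\|g_k \otimes_p g_k\|_{\hten^{\otimes 2(q-p)}}^2 \le C(\log k)^{-1}$ for every $1 \le p \le q-1$, which is exactly the required rate.

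For the variance term, I would refine Lemma 3.3 by carrying out the computation with $k$ finite rather than passing to the limit. The discrepancy between ${\mathbb E}[X_k(t)^2]$ and $t\sigma_q^2$ then decomposes as a boundary contribution from replacing $\int_0^1$ by $\int_{1/k^t}^1$ in \req{lim_sigma}, which decays polynomially in $k$, together with the L'Hopital remainder from the $(r_q,s_q)$ integration, which is $O((\log k)^{-1})$ since the integrand there is (up to constants) regularly varying of index $-1$ along the diagonal. Both corrections are thus negligible compared to $(\log k)^{-1/2}$, and combining with the contraction bound established in the previous step yields the proposition.

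The main obstacle is the bookkeeping required to promote Lemma 4.1 from a qualitative decay statement to an explicit $O(\log k)$ upper bound on the un-normalized integrals appearing in \req{contr_red}--\req{contr_red_2}, while ensuring no spurious logarithmic factors creep in during the case split on the position of $r_{(q)}$ or during the $(R.2)$-based replacement of products of $R$-factors by powers of $(rsr's')^H$. Once this quantitative version of Lemma 4.1 is in place, the proof of Proposition 3.7 reduces to the one-line application of the displayed Corollary 5.2.10 bound.
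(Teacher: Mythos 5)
Your proposal is correct and follows essentially the same route as the paper: both apply the Nourdin--Peccati total variation estimate (Corollary 5.2.10), obtain the dominant $(\log k)^{-1/2}$ rate by observing that Lemma 4.1 actually gives the quantitative bound $\|\tilde f_{k^t}\otimes_p\tilde f_{k^t}\|^2_{\hten^{\otimes 2(q-p)}}\le C\log k$ underlying Lemma 3.5, and dismiss the variance discrepancy $|{\mathbb E}[X_k(t)^2]-\sigma_q^2t|$ as $O((\log k)^{-1})$ via a quantified version of the L'H\^opital computation in Lemma 3.3. The only cosmetic difference is that you state the corollary directly in terms of contraction norms, whereas the paper uses its fourth-cumulant form together with the identity of Lemma 5.2.4 of Nourdin--Peccati to pass to the contractions; these are equivalent up to constants.
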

\begin{proof}The result follows from an estimate in \cite{NoP11} (Cor. 5.2.10):

\beq{stein_dtv} d_{TV} \left(X_k(t),N(t)\right) \le 2 \sqrt{\frac{{\mathbb E}\left[ X_k(t)^4\right]-3{\mathbb E}\left[ X_k(t)^2\right]^2}{3{\mathbb E}\left[ X_k(t)^2\right]^2}} + \frac{2 \left| {\mathbb E}\left[ X_k(t)^2\right]-\sigma^2_q t\right|}{{\mathbb E}\left[ X_k(t)^2\right]\vee \sigma_q^2 t}.\eeq
To simplify notation, we will assume $t=1$.  To help interpret this estimate, the following identity is computed in \cite{NoP11} (see Lemma 5.2.4):
\beq{stein_dist} {\mathbb E}\left[ X_k(1)^4\right]-3{\mathbb E}\left[ X_k(1)^2\right]^2
= \frac{3}{q(\log k)^2} \sum_{p=1}^{q-1} p(p!)^2 \binom{q}{p}^4 (2q-2p)! \| \tilde{f}_k \stackrel{\sim}{\otimes}_p \tilde{f}_k\|^2_{\hten^{\otimes 2(q-p)}}.\eeq
From Lemma 3.5, we know $(\log k)^{-2}\| \tilde{f}_k \stackrel{\sim}{\otimes}_p \tilde{f}_k\|^2_{\hten^{\otimes 2(q-p)}} \to 0$ at a rate $C/\log k$, hence it follows the first term of \req{stein_dtv} is of order $C(\log k)^{-\frac 12}$.  The second term depends on the covergence rate of \req{E_X_k_1}.  In the proof of Lemma 3.3, convergence follows from a limit of the form ${{\mathbb E}\left[ Y_k^2\right]}/{\log k}$.  By L'H\^opital's rule, it follows the rate of convergence has the form $C/\log k$, hence the first term controls. 
\end{proof}

\medskip
\section{A technical lemma}

\begin{lemma}Fix $T> 0$.  Let $1/2 < H < 1$, and for nonnegative $x, y$, let $R(x,y) = \frac 12 \left( x^{2H} + y^{2H}-|x-y|^{2H}\right)$.  Then there is a constant $0< K<\infty$ such that
\[ \int_{[\frac 1T, 1]^4} (xyuv)^{-2H} R(x,y)~R(u,v) |x-u|^{2H-2}|y-v|^{2H-2}dx~dy~du~dv \le K \log T.\]\end{lemma}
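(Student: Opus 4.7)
The strategy is to pass to logarithmic coordinates and exploit translation invariance, reducing the four-dimensional integral to $\log T$ times a finite constant. The first key ingredient is that the coarse estimate $R(s,t)\le c_1(st)^H$ from (R.2) is too lossy---it yields only $(\log T)^2$---and one needs instead the sharper
\[
R(s,t) \;\le\; 2\min(s,t)\max(s,t)^{2H-1} \;=\; 2(st)^H\!\left(\frac{\min(s,t)}{\max(s,t)}\right)^{\!1-H},
\]
which follows from (R.3) since $s^{2H}\le st^{2H-1}$ when $s\le t$. The factor $(\min/\max)^{1-H}$ decays exponentially in the log-distance between $s$ and $t$, and this is the source of the cancellation.

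Set $x=e^{-a}$, $y=e^{-b}$, $u=e^{-c}$, $v=e^{-d}$ with $a,b,c,d\in[0,\log T]$. A direct calculation, combining the factorisation of $|x-u|^{2H-2}\,dx\,du$ in log coordinates with the identity $(a+c)-2\min(a,c)=|a-c|$ (and the analogue for $(b,d)$), shows that the integrand times $dx\,dy\,du\,dv$ is bounded by
\[
4\,e^{-(1-H)\,S(a,b,c,d)}\,\bigl(1-e^{-|a-c|}\bigr)^{2H-2}\bigl(1-e^{-|b-d|}\bigr)^{2H-2}\,da\,db\,dc\,dd,
\]
where $S(a,b,c,d):=|a-b|+|c-d|+|a-c|+|b-d|$. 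The crucial feature is that this bound depends on $(a,b,c,d)$ only through pairwise differences, hence is translation invariant. Partition $[0,\log T]^4$ into the $4!=24$ simplices determined by the orderings of the four coordinates; each is parameterised by the minimum $s\in[0,\log T]$ and three nonnegative gaps $g_1,g_2,g_3$ between consecutive sorted values, subject to $s+g_1+g_2+g_3\le\log T$. Integrating out $s$ contributes at most $\log T$, and it remains to bound, for each ordering, the corresponding three-dimensional integral in the gaps by a finite constant.

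This finiteness is the main technical point. The key combinatorial observation is that $S$ equals the length of the closed walk $a\to b\to d\to c\to a$ on the real line, and any closed walk visiting four real numbers must traverse the range of its vertices in each direction, giving
\[
S(a,b,c,d)\;\ge\;2\bigl(\max(a,b,c,d)-\min(a,b,c,d)\bigr)\;=\;2(g_1+g_2+g_3).
\]
Thus $e^{-(1-H)S}$ supplies uniform exponential decay in every gap, controlling the tails. Near the origin, $(1-e^{-|a-c|_\sigma})^{2H-2}\sim |a-c|_\sigma^{2H-2}$; since in every ordering $|a-c|_\sigma$ and $|b-d|_\sigma$ are contiguous sums of gap variables, the resulting singularities are locally integrable in $\mathbb{R}_{\ge 0}^3$ because $2H-2>-1$ and the polar-type reduction of $(g_i+g_j)^{2H-2}$ produces an $r^{2H-1}\,dr$ factor. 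Summing the $24$ orderings gives $I\le K\log T$.
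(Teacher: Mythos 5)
Your argument is correct, but it takes a genuinely different route from the paper's proof. The paper works directly in the original variables: it splits the domain into four orderings of $x,y,u,v$ (up to symmetry) and in each case performs the four one-dimensional integrations successively, using (R.3) together with an interpolation of the kernel exponents such as $(u-x)^{2H-2}\le (u-y)^{-\alpha}(y-x)^{-\beta}$ with $\alpha+\beta=2-2H$, until only $\int_{1/T}^1 v^{-1}\,dv\le\log T$ remains. You instead pass to logarithmic coordinates, where the sharpened bound $R(s,t)\le 2(st)^H\left(\min(s,t)/\max(s,t)\right)^{1-H}$ turns the integrand (including the Jacobian) into the translation-invariant kernel $4e^{-(1-H)S}\bigl(1-e^{-|a-c|}\bigr)^{2H-2}\bigl(1-e^{-|b-d|}\bigr)^{2H-2}$ --- I checked the exponent bookkeeping and it is exact --- so the factor $\log T$ arises purely from integrating out the translation, and the problem reduces to the finiteness of one fixed integral in the gap variables, which follows from the closed-walk inequality $S\ge 2(\max-\min)$ together with $2H-2>-1$. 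Your route buys a conceptual explanation of why the answer is $\log T$ rather than $(\log T)^2$: it is exactly the coupling factor $e^{-(1-H)(|a-b|+|c-d|)}$ produced by the refined bound on $R$, and all $24$ orderings are treated uniformly; the paper's route is more computational but uses only the listed properties (R.1)--(R.3) and elementary iterated estimates that can be checked line by line without any change of variables. One point you should make explicit in a full write-up: when bounding the local singularity, observe that $\{a,c\}$ and $\{b,d\}$ are complementary pairs of sorted positions, so the two factors can always be dominated by $g_i^{2H-2}g_j^{2H-2}$ with $i\ne j$; if both singular factors could concentrate on the same single gap you would face $g^{4H-4}$, which is not locally integrable for $H\le 3/4$, so this complementarity (rather than contiguity alone) is what makes the local integrability automatic.
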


\begin{proof}
In the following computations, we will obtain estimates based on the order of integration. Due to the symmetries of the integral, it is enough to consider four distinct cases.  We will make frequent use of (R.3), and for a second estimate, note that for $x < y < u$ we can write $(u-x)^{2H-2} \le (u-y)^{-\alpha}(y-x)^{-\beta}$, where $\alpha, \beta > 0$ satisfy $\alpha + \beta = 2-2H$.

\medskip
{\em Case 1:}  $x \le y \le u \le v$
We can write
\begin{align*}
\int_{\frac 1T}^1 \int_{\frac 1T}^v &(uv)^{-2H} R(u,v) \int_{\frac 1T}^u y^{-2H}(v-y)^{2H-2}\int_{\frac 1T}^y x^{-2H}R(x,y)(u-x)^{2H-2}~dx~dy~du~dv\\
&\le C\int_{\frac 1T}^1 \int_{\frac 1T}^v (uv)^{-2H} R(u,v) \int_{\frac 1T}^u y^{-2H}(v-y)^{2H-2}(u-y)^{-\alpha}\int_{\frac 1T}^y (y-x)^{-\beta} + x^{1-2H}y^{2H-1}(y-x)^{-\beta}~dx\dots dv\\
&\le C\int_{\frac 1T}^1 \int_{\frac 1T}^v (uv)^{-2H} R(u,v) (v-u)^{-\alpha}\int_{\frac 1T}^u y^{1-2H-\beta}(u-y)^{-\beta-\alpha}~dy~du~dv\\
&\le C\int_{\frac 1T}^1 \int_{\frac 1T}^v (uv)^{-2H} R(u,v) (v-u)^{-\alpha}u^{-\beta}~du~dv\\
&\le C\int_{\frac 1T}^1 v^{-2H} \int_{\frac 1T}^v u^{-2H}(v-u)^{-\alpha}\left( u^{2H} +uv^{2H-1}\right) ~du~dv\\
&\le C\int_{\frac 1T}^1 v^{-1} dv \le K \log T.\end{align*}

\medskip
{\em Case 2:} $x<y<v<u$
For this case, we use constants $\alpha, \beta > 0$ such that $\alpha+\beta = 2H-2$, and $\gamma,\delta > 0$ such that $\gamma + \delta = \alpha$.
\begin{align*}
\int_{\frac 1T}^1 \int_{\frac 1T}^u &(uv)^{-2H} R(u,v) \int_{\frac 1T}^u y^{-2H}(v-y)^{2H-2}\int_{\frac 1T}^y x^{-2H}R(x,y)(u-x)^{2H-2}~dx~dy~dv~du\\
&\le C\int_{\frac 1T}^1 \int_{\frac 1T}^u (uv)^{-2H} R(u,v) \int_{\frac 1T}^u y^{1-2H-\beta}(v-y)^{2H-2}(u-y)^{-\alpha}~dy~dv~du\\
&\le C\int_{\frac 1T}^1 \int_{\frac 1T}^u (uv)^{-2H} R(u,v)(u-v)^{-\gamma} \int_{\frac 1T}^u y^{1-2H-\beta}(v-y)^{2H-2-\delta}~dy~dv~du\\
&\le C\int_{\frac 1T}^1 u^{-2H} \int_{\frac 1T}^u v^{-2H-\beta-\delta}\left( v^{2H} + vu^{2H-1}\right)(u-v)^{-\gamma}~dv~du\\
&\le C\int_{\frac 1T}^1 u^{-1} \le K\log T.\end{align*} 

\medskip
{\em Case 3:}  $x<u<y<v$
\begin{align*}
\int_{\frac 1T}^1 \int_{\frac 1T}^v &(yv)^{-2H} (v-y)^{2H-2} \int_{\frac 1T}^y u^{-2H}R(u,v)\int_{\frac 1T}^u x^{-2H}R(x,y)(u-x)^{2H-2}~dx~du~dy~dv\\
&\le C\int_{\frac 1T}^1 \int_{\frac 1T}^v (yv)^{-2H} (v-y)^{2H-2} \int_{\frac 1T}^y u^{-2H}\left(u^{2H} +uv^{2H-1}\right)\left(u^{2H-1} +y^{2H-1}\right)~du~dy~dv\\
&\le C\int_{\frac 1T}^1 \int_{\frac 1T}^v (yv)^{-2H} (v-y)^{2H-2} \int_{\frac 1T}^y \left(u^{2H-1}+y^{2H-1}+v^{2H-1}+u^{1-2H}(vy)^{2H-1}\right)~du~dy~dv\\
&\le C\int_{\frac 1T}^1 \int_{\frac 1T}^v (yv)^{-2H} (v-y)^{2H-2}\left( y^{2H} + yv^{2H-1}\right)~dy~dv\\
&\le C\int_{\frac 1T}^1 v^{-1}~dv \le K \log T.\end{align*}

\medskip
{\em Case 4:}  $x < v < u<y$
\begin{align*}
\int_{\frac 1T}^1 \int_{\frac 1T}^y &(uy)^{-2H}\int_{\frac 1T}^u v^{-2H}R(u,v)(y-v)^{2H-2}\int_{\frac 1T}^v x^{-2H}R(x,y)(u-x)^{2H-2}~dx~dv~du~dy\\
&\le C\int_{\frac 1T}^1 \int_{\frac 1T}^y (uy)^{-2H}\int_{\frac 1T}^u v^{-2H}R(u,v)(y-v)^{2H-2}(u-v)^{-\alpha}\int_{\frac 1T}^v x^{-2H}\left(x^{2H} + xy^{2H-1}\right)(v-x)^{-\beta}~dx~dv~du~dy\\
&\le C\int_{\frac 1T}^1 \int_{\frac 1T}^y (uy)^{-2H}(y-u)^{-\alpha}\int_{\frac 1T}^u v^{-2H}\left(v^{2H} + vu^{2H-1}\right)(u-v)^{-\alpha-\beta}\left(v^{1-\beta}+v^{2-2H-\beta}y^{2H-1}\right)~dv~du~dy\\
&\le C\int_{\frac 1T}^1 \int_{\frac 1T}^y (uy)^{-2H}(y-u)^{-\alpha}\left( u^{2H-\beta} +y^{2H-1}u^{1-\beta}\right)~du~dy\\
&\le C\int_{\frac 1T}^1 y^{-2H}\left( y^{1-\alpha-\beta} + y^{2H-1}\right)~dt\\
&\le C\int_{\frac 1T}^1 y^{-1}~dy \le K \log T.\end{align*}

\end{proof}

\end{document}